\newcommand\reallywidehat[1]{%
\savestack{\tmpbox}{\stretchto{%
  \scaleto{%
    \scalerel*[\widthof{\ensuremath{#1}}]{\kern-.6pt\bigwedge\kern-.6pt}%
    {\rule[-\textheight/2]{1ex}{\textheight}}
  }{\textheight}%
}{0.5ex}}%
\stackon[1pt]{#1}{\tmpbox}%
}
\renewcommand*{\backref}[1]{}
\renewcommand*{\backrefalt}[4]{%
    \ifcase #1 (Not cited.)%
    \or        (Cited on page~#2.)%
    \else      (Cited on pages~#2.)%
    \fi}
\definecolor{maroon}{rgb}{0.5, 0.0, 0.0}
\definecolor{darkblue}{rgb}{0.03, 0.27, 0.49}
\newtheorem{proposition}{Proposition}[section]
\newtheorem{theorem}[proposition]{Theorem}
\newtheorem{corollary}[proposition]{Corollary}
\newtheorem{lemma}[proposition]{Lemma}
\newtheorem*{theorem*}{Theorem}
\newtheorem*{proposition*}{Proposition}
\newtheorem*{lemma*}{Lemma}
\newtheorem*{corollary*}{Corollary}
\newtheorem*{problem*}{Problem}
\newtheorem{question*}{Question}
\newtheorem*{rep@theorem}{\rep@title}
\newcommand{\newreptheorem}[2]{
\newenvironment{rep#1}[1]{
 \def\rep@title{#2 \ref{##1}}
 \begin{rep@theorem}}
 {\end{rep@theorem}}}
\newtheorem*{rep@proposition}{\rep@title}
\newcommand{\newrepproposition}[2]{
\newenvironment{rep#1}[1]{
 \def\rep@title{#2 \ref{##1}}
 \begin{rep@proposition}}
 {\end{rep@proposition}}}
\theoremstyle{definition}
\newtheorem{definition}[proposition]{Definition}
\newtheorem{question}[proposition]{Question}
\newcommand{\Q}{\mathbb{Q}}
\newcommand{\Z}{\mathbb{Z}}
\newcommand{\R}{\mathbb{R}}
\newcommand{\bunderline}[1]{\underline{#1\mkern-2mu}\mkern2mu }
\def\du {\bar{d}}
\def\dl {\bunderline{d}}
\begin{document}
\title{On homology spheres of the trivial local equivalence class}

\author{Jaewon Lee}
\address{Department of Mathematical Sciences, KAIST, 34141 Daejeon, Republic of Korea}
\email{\url{freejw@kaist.ac.kr}}
\urladdr{\url{https://mathsci.kaist.ac.kr/~freejw}}

\author{O{\u{g}}uz \c{S}avk}
\address{Department of Mathematics, Middle East Technical University, 06800 \c{C}ankaya, Ankara, Turkey}
\email{\url{savk@metu.edu.tr}}
\urladdr{\url{https://sites.google.com/view/oguzsavk}}

\date{}

\begin{abstract} 
  In the homology cobordism group $\Theta_\mathbb{Z}^3$, it is not known if there are non-trivial linear dependences between Seifert fibered spheres. Based on involutive Heegaard Floer theory, Hendricks, Manolescu, and Zemke introduced the local equivalence group $\mathfrak{I}$ along with the homomorphism $h:\Theta_\mathbb{Z}^3 \rightarrow \mathfrak{I}$. Using the work of Dai and Stoffregen, one can find non-trivial linear dependences between the images of Seifert fibered spheres under $h$. Therefore, it is interesting to ask if such dependences in $\mathfrak{I}$ originate from $\Theta_\mathbb{Z}^3$. In this paper, by employing the $r_s$-invariants from the filtered instanton Floer homology developed by Nozaki, Sato, and Taniguchi, we provide certain conditions to guarantee that such relations are not realized even in the rational homology cobordism group. We also discuss the local equivalence class of the $\operatorname{Pin(2)}$-equivariant Seiberg--Witten Floer stable homotopy type.
\end{abstract}

\maketitle
\section{Introduction}
\label{sec:intro}

The homology cobordism group $\Theta^3_\Z$ of homology spheres, introduced by Gonz{\'a}lez-Acu{\~n}a \cite{GA70}, has played a central role in the development of low-dimensional topology over the last five decades \cite{Man18, Hom23, Sav24}. The algebraic structure of $\Theta^3_\Z$ is linked to several topological problems such as the three fibers conjecture \cite{Law88, Kol08}, the disproof of the triangulation conjecture for topological manifolds in high dimensions \cite{Man16}, and so on. While it is known that the homology cobordism group contains an infinite rank summand by \cite{DHST23}, the existence of a torsion element is one of the most notable open problems about the structure of $\Theta^3_\Z$.

Seifert fibered spheres have been core objects to study the structure of $\Theta^3_\Z$. In particular, the subgroup $\Theta^3_{SF}$ generated by Seifert fibered spheres itself contains an infinite rank subgroup in $\Theta^3_\Z$ \cite{Fur90, FS90}. Moreover, the aforementioned infinite rank summand \cite{DHST23} lies also in $\Theta_{SF}^3$. On the other hand, there are several infinite families of Seifert fibered spheres known to bound contractible manifolds or homology balls \cite{AK79, CH81, FS81, Fic84}, namely trivial in $\Theta_\Z^3$. However, it is neither known in $\Theta_\Z^3$ whether there exists a non-trivial Seifert fibered sphere of finite order, nor, more generally, whether there exists any non-trivial linear dependence among them. More precisely, we ask:

\begin{question}\label{question-1}
  Are distinct Seifert fibered spheres always linearly independent in $\Theta_\Z^3$, provided they do not bound homology balls?
\end{question}

Hendricks, Manolescu, and Zemke \cite{HMZ18} defined the notion of the \textit{local equivalence class} of an involutive Heegaard Floer chain complex \cite{HM17} of a homology sphere. Such classes form an abelian group $\mathfrak{I}$, called the \textit{local equivalence group}, together with the homomorphism: $$h:\Theta_\Z^3\rightarrow \mathfrak{I}.$$

Let $\mathfrak{I}_{SF}$ be the image of $\Theta_{SF}^3$ under $h$. While $\mathfrak{I}_{SF}$ has been extensively studied in \cite{DM19, DS19} and is known to be isomorphic to $\Z^\infty$, it is nevertheless possible to find many linear dependences in $\mathfrak{I}_{SF}$. Therefore, this fact and Question \ref{question-1} motivate us to study the kernel of the restriction of $h$ to $\Theta^3_{SF}$. For example, the Brieskorn spheres $\Sigma(2, 3, 12n-7)$ for $n\ge 1$ are known to share the same local equivalence class \cite{HM17}, yet they were previously shown by Furuta \cite{Fur90} to be linearly independent in $\Theta_\Z^3$.

Dai and Manolescu \cite{DM19} introduced a notion called \emph{monotone graded subroots} of Seifert fibered spheres to simply determine their local equivalence classes. By analyzing monotone graded subroots systematically, Dai and Stoffregen \cite{DS19} proved that: $$\mathfrak{I}_{SF}  = \langle h(B(n)) \rangle \cong \Z^\infty,$$ where $B(0)$ is the Poincar\'e homology sphere $\Sigma(2,3,5)$ and $B(n)$ is the Brieskorn sphere $\Sigma(2n+1, 4n+1, 4n+3)$ with $n > 0$. 

Therefore, potential counterexamples to Question \ref{question-1} arise from linear dependence relations with respect to this explicit basis $\{h(B(n))\}_{n\ge 0}$ of $\mathfrak{I}_{SF}$. For any Seifert fibered sphere $Y$, there exists a unique homology sphere $Z_Y$ as the linear expression of $Y$ with respect to $\{B(n)\}$ such that $h(Y)=h(Z_Y)$, or equivalently, $$Y\# -Z_Y \in \operatorname{ker}(h).$$ We call such $Z_Y$ the \emph{associated homology sphere to $Y$ with respect to $\{B(n)\}$}. For example, for any $Y$ in the aforementioned family $\Sigma(2,3,12n-7)$, the associated homology sphere $Z_Y$ is $\Sigma(2,3,5)=B(0)$.

In this paper, we find certain conditions for a Seifert fibered sphere $Y$ to guarantee that $Y$ and the associated homology sphere $Z_Y$ are not anymore related in $\Theta_\Z^3$. Such conditions are expressed in terms of the product $\mathcal{E} (Y)$ of the exponents of $Y$ and the Fintushel--Stern $R$-invariant \cite{FS85}. Note that any linear combinations of the homology spheres of the form $Y\# -Z_Y$ are potential counterexamples to Question \ref{question-1}. We also prove their linear independence.

\begin{theorem}\label{thm-Z}
     Let $Y$ be a Seifert fibered sphere and let~$Z_Y$ be the associated homology sphere of the same image in $\mathfrak{I}$. If~$R(Y) > 0$ and $\mathcal{E} (Y)$ is distinct from $30$ and $(2m+1)(4m+1)(4m+3)$, then $Y\# -Z_Y$ is non-trivial in $\ker{h}$. Moreover, given such a family of homology spheres $Y(n)$ with the associated $Z(n)$, if all $\mathcal{E} (Y(n))$ are distinct, then $Y(n)\#-Z(n)$ are linearly independent in $\operatorname{ker} (h)$ up to rational homology cobordism.
\end{theorem}

Note that the previously known examples, such as $\{\Sigma(2, 3, 12n-7)\}$, also fall under Theorem~\ref{thm-Z}. We remark that we obtain linear independence not only in $\Theta_\Z^3$, but also in the \textit{rational homology cobordism group} $\Theta_\Q^3$ of rational homology spheres. Together with the assumption on $\mathcal{E}(Y)$ in Theorem~\ref{thm-Z}, Furuta's result \cite{Fur90} is sufficient to prove linear independence in $\Theta^3_\Z$ of such homology spheres $Y(n)\# -Z(n)$. However, this does not directly imply linear independence in $\Theta_\Q^3$, since the kernel of $\Theta_\Z^3\rightarrow\Theta_\Q^3$ is known to be non-trivial \cite{FS84, AL18, Sav20, Sim21}.

To obtain their linear independence in $\Theta^3_\Q$, we employ the $r_s$-invariants from the filtered instanton Floer homology, introduced by Nozaki, Sato, and Taniguchi \cite{NST24}. They extended the previous work of Donaldson \cite{Don02}, Fintushel and Stern \cite{FS85, FS90}, and Furuta \cite{Fur90} in instanton Floer theory.

Note that Theorem \ref{thm-Z} indicates that the $d$-invariant \cite{OS03a}, the $\dl$- and $\du$-invariants \cite{HM17}, and the $\phi_n$-invariants for all $n\ge 1$ \cite{DHST23} must vanish for homology spheres $Y(n)\# -Z(n)$ in Theorem \ref{thm-Z}. In other words, for any homology sphere $Y$ obtained by a non-trivial linear combination of $Y(n)\# -Z(n)$, we see that: $$\underline{d} (Y) = d(Y) = \overline{d}(Y) = \phi_n(Y) = 0.$$ We observe that the $d$-invariant is not a complete rational homology cobordism invariant. One can find some homology spheres of infinite order in $\Theta^3_\Q$ with vanishing $d$-invariants from \cite{NST24}. We also note that the $\bar{\mu}$-invariant \cite{Neu80, Sie80} vanishes by the result of \cite{DS19}.

On the other hand, Stoffregen \cite{Sto20} defined an analogous group in Seiberg--Witten Floer theory together with the homomorphism: $$\Theta^3_\Z \to \mathfrak{LE},$$ where the latter group denotes the local equivalence group of the $\operatorname{Pin}(2)$-equivariant Seiberg--Witten Floer stable homotopy type. Recently, Dai, Sasahira, and Stoffregen \cite{DSS23} proved that two Seifert fibered spheres have the same image in $\mathfrak{LE}$ if and only if they have the same image in $\mathfrak{I}$. Therefore, for a Seifert fibered sphere $Y$, the homology sphere $Y\# -Z_Y$ lies in $\operatorname{ker}(\Theta^3_\Z \to \mathfrak{LE})$ when the associated homology sphere $Z_Y$ to $Y$ is also Seifert fibered. Moreover, they also fall under Theorem \ref{thm-Z} as well.

For those homology spheres, the $\kappa$-invariant \cite{Man14} and the $\kappa o_i$-invariants for $i=0, \ldots, 7$ \cite{Lin15} from Seiberg--Witten Floer stable homotopy type, and the $\delta$-invariant \cite{Fro10}, the $\alpha$-, $\beta$-, and $\gamma$-invariants \cite{Man16}, and the $\underline{\delta}$- and $\overline{\delta}$-invariants \cite{Sto17b} from Seiberg--Witten Floer homology must vanish. Namely, for any homology spheres $Y$ obtained by non-trivial linear combinations of $Y(n)\# -Z(n)$, we also have the following: $$ \kappa (Y)= \kappa o_i (Y) =  \alpha (Y)= \beta (Y) = \gamma (Y) = \underline{\delta} (Y) = \delta (Y) = \overline{\delta} (Y) = 0.$$

Based on \cite{DS19}, we construct explicit examples of Theorem \ref{thm-Z} by taking $\{Y(n)\}$ as the families of the Brieskorn spheres: $$Y_1(n) = \Sigma(4n+1, 6n+2, 12n+1) \quad \text{and} \quad Y_2(n)=\Sigma(4n-1, 6n-2, 12n-1).$$ Their monotone graded subroots and hence their local equivalence classes were computed by Karakurt and the second author \cite{KS22}. By using the fact that every monotone graded subroot is decomposed into simpler ones~\cite[Theorem~1.1]{DS19}, we first find associated homology spheres $Z_1 (n)$ and $Z_2 (n)$ to $Y_1(n)$ and $Y_2(n)$, respectively. From these linear relations, we obtain infinitely many homology spheres lying in $\operatorname{ker}(h)$ as follows: $$ \big \{Y_1(n) \ \# \ -Z_1(n)  \big \}_{n\geq 1}  \cup \big \{Y_2(n) \ \# \ -Z_2(n)  \big \}_{n \geq 1}.$$

These families satisfy the conditions of Theorem~\ref{thm-Z} so that we obtain a substantial structural difference between $\Theta^3_{SF}$ and $\mathfrak{I}_{SF}$, providing affirmative evidence for Question \ref{question-1}. In other words, those families reprove an implicitly known result from other examples in several literature \cite{Fur90, HM17, NST24}.

\begin{corollary}\label{cor-A}
There are infinitely many homology spheres of the trivial local equivalence class that are linearly independent in the rational homology cobordism group.
\end{corollary}
    
Similarly, we provide examples of Theorem \ref{thm-Z} whose local equivalence classes of Seiberg--Witten Floer stable homotopy type are also the same, by using the following family of the Brieskorn spheres: $$Y_3(n) = \Sigma(8n+1, 12n+1, 24n+5).$$ Relying on \cite{KS22}, we find their associated homology spheres $Z_3 (n)$ that are Seifert fibered, so that the resulting homology spheres are as follows: $$\{Y_3(n)\ \# \ -Z_3(n)\}_{n\geq1}.$$

To the authors' best knowledge, it is not known whether there exists a Seifert fibered sphere of the trivial local equivalence class but non-trivial in $\Theta^3_\Q$. The reason is that it is difficult to compute the $r_s$-invariant in general when the Fintushel--Stern $R$-invariant \cite{FS85} is negative. Moreover, Issa and McCoy \cite{IM18} proved that for a Seifert fibered sphere $Y$ with $d(Y) =0$, $R(Y)$ is always negative. Therefore, we conclude the introduction with the following question:

\begin{question}
Is there a Seifert fibered sphere $Y$ with $d(Y)=0$ that does not bound a rational homology ball?
\end{question}

\subsection*{Conventions}
Every manifold is assumed to be compact, connected, oriented, and smooth. The connected sum of $n$ copies of a manifold $Y$ is denoted by $nY$. We use the terms \say{homology sphere} and \say{homology ball} to refer to an \say{integral homology $3$-sphere} and an \say{integral homology $4$-ball}, respectively. $Y$ denotes a homology sphere unless otherwise stated. Seifert fibered spheres are oriented as the links of Brieskorn--Hamm complete intersection singularities in $\mathbb{C}^3$, so they bound plumbed $4$-manifolds with unique minimal negative definite plumbing graphs of central weights $e\leq-1$, see \cite[Section~1.1]{Sav02}. $\mathbb{F}$ denotes the field of characteristic $2$. Our grading shift convention for involutive Heegaard Floer invariants is in line with the one in \cite{DS19}. 

\subsection*{Acknowledgments}
The authors would like to thank Marco Golla, JungHwan Park, and Masaki Taniguchi for helpful conversations. The authors are also grateful to Hayato Imori and Imogen Montague for reading the draft and providing detailed feedback. The first author is partially supported by the Samsung Science and Technology Foundation (SSTF-BA2102-02) and the NRF grant RS-2025-00542968. The second author was supported by the CNRS postdoctoral fellowship at the Laboratoire de Math\'ematiques Jean Leray in Nantes Universit\'e, France.


\section{Preliminaries on Floer Theories}

The proof of Theorem \ref{thm-Z} involves two theories: involutive Heegaard Floer homology \cite{HM17} and filtered instanton Floer homology \cite{NST24}. In this section, we briefly recall the basic concepts, key properties, and essential notions of each theory. For the first two subsections, we review the involutive Heegaard Floer homology and its local equivalence class in terms of monotone graded subroots. In the third, we consider an analogue in the Seiberg--Witten Floer theory. In the last, we briefly recall the filtered instanton homology and the construction of the $r_s$-invariants.

\subsection{Involutive Heegaard Floer homology and local equivalence group}

In this subsection, we review the involutive Heegaard Floer theory and the notion of local equivalence. We will only consider the minus flavor of the Heegaard Floer chain complex $CF^-(Y)$ \cite{OS04}, which is a graded $\mathbb{F}[U]$-module, where $U$ is a formal variable of grading $-2$.

Ozsv\'{a}th and Szab\'{o} \cite{OS04} defined the Heegaard Floer homology $HF^-(Y)$ for a closed 3-manifold $Y$, and proved that it is a diffeomorphism invariant. Hendricks and Manolescu \cite{HM17} defined a homotopy involution $\iota$ on a Heegaard Floer chain complex by composing the map which arises from exchanging $\alpha$-curves and $\beta$-curves in the Heegaard diagram with the canonical chain homotopy equivalence based on the naturality \cite{JTZ21}. Then they introduced the involutive Heegaard Floer homology $HFI^-(Y)$ as the homology of the mapping cone of $1+\iota$ on $CF^-(Y)$. When $Y$ is a homology sphere, $HFI^-(Y)$ gives rise to two homology cobordism invariants $\dl(Y)$ and $\du(Y)$.

Instead of computing $HFI^- (Y)$ directly, Hendricks, Manolescu, and Zemke \cite{HMZ18} defined an equivalence relation of the involutive Heegaard Floer chain complexes of homology spheres. The equivalence class is a homology cobordism invariant. Moreover, the classes form an abelian group $\mathfrak{I}$, called the \emph{local equivalence group}, together with the homomorphism $h:\Theta_\Z^3\rightarrow \mathfrak{I}$ as mentioned in Section~\ref{sec:intro}.

Let $Y_1$ and $Y_2$ be two homology spheres. Recall from \cite{HM17} that a cobordism $W$ from $Y_1$ to $Y_2$ with a self-conjugate spin$^c$-structure $\mathfrak{t}$ induces the $\iota$-equivariant $\mathbb{F}[U]$-chain map $F_{W, \mathfrak{t}}: CF^-(Y_1)\rightarrow CF^-(Y_2)$. Moreover, we have:

\begin{theorem}
  If two homology spheres $Y_1$ and $Y_2$ are homology cobordant, then there exist two grading-preserving $\iota$-equivariant $\mathbb{F}[U]$-chain maps $F:CF^-(Y_1)\rightarrow CF^-(Y_2)$ and $G:CF^-(Y_2)\rightarrow CF^-(Y_1)$ where $F$ and $G$ induce isomorphisms on homology after localization of $U$.
\label{thm-motivation}
\end{theorem}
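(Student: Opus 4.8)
The plan is to upgrade the standard functoriality of $CF^-$ under cobordisms to the involutive setting, and then exploit the fact that a homology cobordism, split into two pieces, gives maps whose composition is a near-identity. Concretely, suppose $W$ is a homology cobordism from $Y_1$ to $Y_2$; it carries a unique spin$^c$-structure, which is automatically self-conjugate, so by the discussion recalled just before the statement we get an $\iota$-equivariant $\mathbb{F}[U]$-chain map $F := F_{W,\mathfrak{t}} : CF^-(Y_1) \to CF^-(Y_2)$. Turning $W$ upside down gives a homology cobordism $\bar{W}$ from $Y_2$ to $Y_1$ and hence $G := F_{\bar W, \mathfrak{t}} : CF^-(Y_2) \to CF^-(Y_1)$, again $\iota$-equivariant. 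The first step is to check the grading statement: the grading shift of $F_{W,\mathfrak{t}}$ is $\tfrac{c_1(\mathfrak{t})^2 - 2\chi(W) - 3\sigma(W)}{4}$, and for a homology cobordism $\chi(W)=0$, $\sigma(W)=0$, and $c_1(\mathfrak{t})=0$ (as $H^2(W;\mathbb{Z})$ is torsion-free and trivial in the relevant sense), so the shift is $0$ and $F$, $G$ are grading-preserving.

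The second step is the localization statement. Stacking $W$ on top of $\bar W$ (or vice versa) yields a cobordism from $Y_i$ to itself which, after capping or by the composition law for cobordism maps, differs from the product cobordism $Y_i \times [0,1]$ only by a homology $S^3\times S^1$ summand; the composition law $F_{W' \cup W, \mathfrak{t}} \simeq F_{W',\mathfrak{t}} \circ F_{W,\mathfrak{t}}$ then identifies $G \circ F$ (resp. $F\circ G$) with the cobordism map of this near-product, up to chain homotopy. On homology after inverting $U$, the map induced by the product cobordism is the identity on $U^{-1}HF^-(Y_i) \cong \mathbb{F}[U,U^{-1}]$, and the extra $S^1\times S^3$-type factor contributes an isomorphism (this is exactly the content that makes $d$ a homology cobordism invariant — one shows $F,G$ induce isomorphisms on the localized homology, both of which are $\mathbb{F}[U,U^{-1}]$ in a single grading, so a degree-$0$ $\mathbb{F}[U]$-map between them inducing something nonzero is an isomorphism). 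Hence $(G\circ F)$ and $(F\circ G)$ are isomorphisms after localization, which forces each of $F$ and $G$ to be an isomorphism after localization.

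The main obstacle is the $\iota$-equivariance, i.e. verifying that the cobordism map $F_{W,\mathfrak{t}}$ commutes with $\iota$ up to $\iota$-equivariant chain homotopy rather than merely being a chain map. This is where one must invoke the naturality of Heegaard Floer homology \cite{JTZ21} and the compatibility of the conjugation symmetry with cobordism maps established in \cite{HM17}: the point is that the self-conjugacy of $\mathfrak{t}$ makes $F_{W,\mathfrak{t}}$ and its conjugate $F_{W,\bar{\mathfrak{t}}} = F_{W,\mathfrak{t}}$ chain homotopic in a way that intertwines with $\iota$ on the two ends. Granting that input, the rest is a formal consequence of the composition law and the grading formula, so I would organize the write-up as: (i) recall the grading-shift formula and evaluate it for a homology cobordism; (ii) invoke \cite{HM17, HMZ18} for $\iota$-equivariance of $F_{W,\mathfrak{t}}$ when $\mathfrak{t}$ is self-conjugate; (iii) apply the composition law to $W \cup \bar W$ and $\bar W \cup W$ and deduce the localization statement from the known structure of $U^{-1}HF^-$ of a homology sphere.
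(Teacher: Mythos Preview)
The paper does not supply a proof of Theorem~\ref{thm-motivation}; it is quoted as background from \cite{HM17} and \cite{HMZ18}, and the paper moves on immediately to the definition of local equivalence. Your sketch is therefore being compared to the argument in those references rather than to anything written in the present paper.

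Your outline assembles the correct ingredients --- the grading-shift formula, the self-conjugacy of the unique $\mathrm{spin}^c$-structure on a homology cobordism, and the $\iota$-equivariance of $F_{W,\mathfrak t}$ for self-conjugate $\mathfrak t$ from \cite{HM17} --- but step~(iii) as written has a gap. The composite cobordism $\bar W \cup_{Y_2} W$ is \emph{not} the product $Y_1 \times [0,1]$ with a homology $S^3 \times S^1$ summand attached; it is merely some homology cobordism from $Y_1$ to itself, and asserting that its cobordism map is an isomorphism on $U^{-1}HF^-$ is exactly the statement you are trying to prove (with the pair $(Y_1,Y_1)$ in place of $(Y_1,Y_2)$). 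So appealing to the composite is circular.

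The repair is the one you already gesture at in your parenthetical: since $U^{-1}HF^-(Y_i) \cong \mathbb{F}[U,U^{-1}]$ and $F$ is a grading-preserving $\mathbb{F}[U]$-module map, it is an isomorphism after localization as soon as it is \emph{nonzero} there. Nonvanishing is a direct consequence of $b_2^+(W)=0$: for cobordisms with $b_2^+=0$ between rational homology spheres the induced map on $HF^\infty$ (equivalently on $U^{-1}HF^-$) is an isomorphism, a standard fact from \cite{OS03a} underlying the proof that $d$ is a homology cobordism invariant. You should invoke this directly for each of $W$ and $\bar W$ separately; once you do, the composition step becomes unnecessary.
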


The local equivalence group is modeled on the homology cobordism group in the sense of Theorem \ref{thm-motivation}.

\begin{definition}
Let $C$ be a free, $\Z$-graded and finitely generated chain complex over $\mathbb{F}[U]$ with a grading-preserving $\mathbb{F}[U]$-equivariant homotopy involution $\iota$. The pair $(C, \iota)$ is called an \textit{$\iota$-complex} if the localization of its homology is isomorphic to $\mathbb{F}[U, U^{-1}]$, i.e., $$U^{-1}H_*(C) \cong \mathbb{F}[U, U^{-1}].$$
\end{definition}

\noindent Note that the involutive Heegaard Floer chain complex $(CF^-(Y), \iota)$ for a homology sphere $Y$ is an $\iota$-complex.

\begin{definition}
  Let $(C_1, \iota_1)$ and $(C_2, \iota_2)$ be two $\iota$-complexes. If there are grading-preserving $\iota$-equivariant chain maps $F:C_1\rightarrow C_2$ and $G:C_2\rightarrow C_1$ which induce isomorphisms on the homology after localization of $U$, then two $\iota$-complexes are called \textit{locally equivalent}.
\end{definition}

It is proved in \cite{HMZ18} that the set of $\iota$-complexes modulo local equivalence with the operation tensor product forms an abelian group $\mathfrak{I}$, called the \emph{local equivalence group}. The identity element of $\mathfrak{I}$ is given by the trivial complex consisting of a single $\mathbb{F}[U]$-tower starting at grading zero, together with the trivial $\iota$. Inverse elements in $\mathfrak{I}$ are obtained by dualizing $\iota$-complexes. Moreover, Hendricks, Manolescu, and Zemke \cite{HMZ18} also proved that there is an $\iota$-equivariant chain homotopy equivalence: $$(CF^-(Y_1\#Y_2),\iota) \simeq ((CF^-(Y_1)\otimes CF^-(Y_2))[-2], \iota_1\otimes \iota_2),$$ where $[-2]$ denotes the grading shift by $-2$. By Theorem~\ref{thm-motivation}, the local equivalence class of $Y$ is a homology cobordism invariant, and from the connected sum formula above we have the homomorphism defined as: $$h: \Theta_\mathbb{Z}^3 \to \mathfrak{I}, \ \ h(Y) = (CF^-(Y)[-2], \iota),$$ where the $d$-invariant \cite{OS03a}, the $\dl$- and $\du$-invariants \cite{HM17}, and the $\phi_n$-invariants for all $n \geq 1$ \cite{DHST23} factor through $\mathfrak{I}$.

\subsection{Monotone graded subroots for Seifert fibered spheres}

In this subsection, we explain the roles of graded roots and their monotone graded subroots to describe local equivalence classes of Seifert fibered spheres. We follow the references \cite{Nem05, DM19, DS19, KS22}.

In \cite{Nem05}, N\'emethi introduced two combinatorial objects to compute Heegaard Floer homology of Seifert fibered spheres effectively. The first object is called a \emph{graded root} $(R,\nu)$ equipped with a grading function $\nu$ on the vertex set of $R$, where $R$ is an upwards-opening tree with an infinite downwards stem defined from the negative definite plumbing graph for a Seifert fibered sphere. The second one is a graded $\mathbb{F}[U]$-module, called the \emph{lattice homology} $\mathbb{H} ^- (R)$. It is combinatorially defined from $(R,\nu)$, whose grading is induced by $\nu$ with the formal variable $U$ of grading $-2$. The grading on $\mathbb{H}^-(R)$ is denoted by the same symbol $\nu$ by abuse of notation. See \cite{Nem05, DM19} for precise definitions.

As observed in \cite{Dai18, DM19}, for a Seifert fibered sphere, there is an involution $J$ on the vertex set of $R$ that reflects a graded root along the central vertical axis. We call the triple 
$(R,\nu,J)$ a \textit{symmetric graded root}, and an example is depicted in Figure~\ref{fig:graded_root}. Moreover, there is an $\mathbb{F}[U]$--equivariant action $J_*$ induced by $J$ on $\mathbb{H} ^- (R)$. See \cite[Section~2.1]{Dai18} for further discussion.

\begin{figure}[htbp]
\centering
\includegraphics[width=0.3\columnwidth]{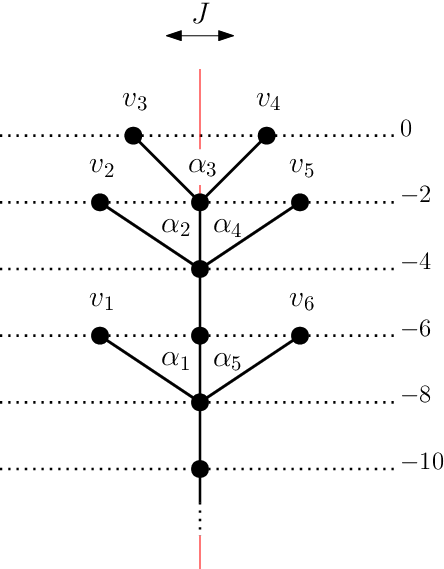}
\caption{The graded root $R$ with the involution $J$ for the Brieskorn sphere $\Sigma(3,4,13)$. The leaves and angles of $R$ are labeled by $v_1, \ldots v_6$ and $\alpha_1, \ldots \alpha_5$, respectively.}
\label{fig:graded_root}
\end{figure}

By combining the results of Ozsv\'ath and Szab\'o \cite{OS03b}, N\'emethi \cite{Nem05}, and Dai and Manolescu \cite{DM19}, we have the following graded $\mathbb{F}[U]$-module isomorphism: $$F: HF^-(Y) \xrightarrow{\cong} \mathbb{H}^-(R)[\sigma] \quad \text{and} \quad F \circ \iota_* =  J_* \circ F,$$ where $\sigma$ denotes a grading shift\footnote{For a Seifert fibered sphere, this shift is determined by algebraic topology of the minimal negative definite plumbing bounded by $Y$. See \cite[Section~2.3]{KS22} and references therein for a precise definition.} and $\iota_*$ is the induced involution by $\iota$ on $HF^-(Y)$.

Following \cite{DM19}, we next construct a finitely generated free graded $\mathbb{F}[U]$-chain complex whose homology is the same as $\mathbb{H}^-(R)$. See \cite[Lemma~4.3]{DM19} for a proof of this fact.

Let $v_1, v_2, \ldots, v_n$ (resp. $\alpha_1, \alpha_2, \ldots, \alpha_{n-1}$) be the leaves (resp. the upward-opening angles) of $R$, enumerated from left to right in lexicographic order. The grading of vertex $v_i$ and the grading of the vertex supporting angle $\alpha_i$ are denoted by $\nu(v_i)$ and $\nu(\alpha_i)$, respectively; see Figure~\ref{fig:graded_root}.

\begin{figure}[htbp]
\centering
\includegraphics[width=0.35\columnwidth]{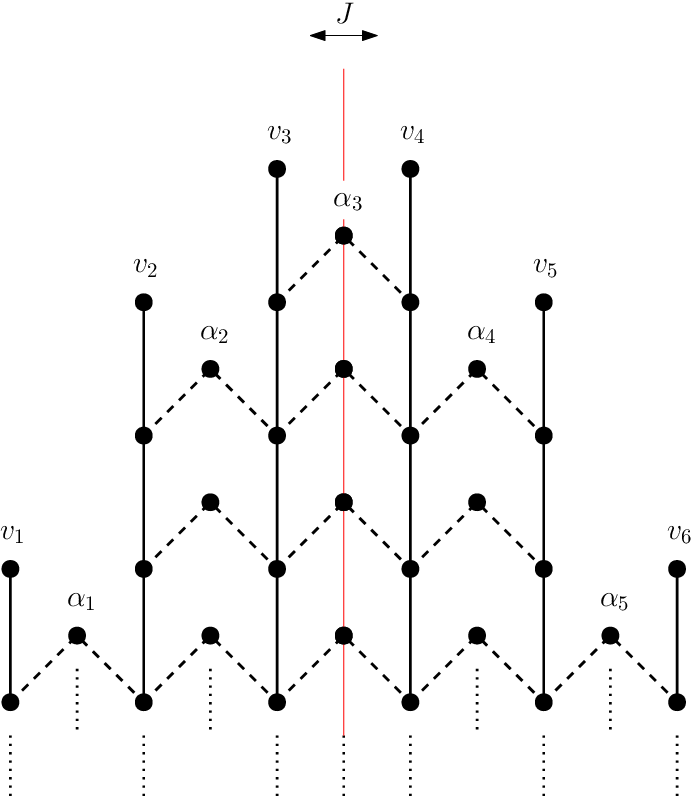}
\caption{The standard complex $C_*(R)$ with the involution $J$ that captures the lattice homology $\mathbb{H}^- (\Sigma(3,4,13))$. Here, the solid (resp. the dashed) lines represent the action of $U$ (resp. $\partial$).}
\label{fig:complex}
\end{figure}

Then the generators of $C_*(R)$ are given as follows. For each leaf $v_i$, we place a single generator (also denoted by $v_i$ by abuse of notation) in grading $\nu(v_i)$, so that we introduce an entire tower of generators $\mathbb{F}[U]v_i$. For each angle $\alpha_i$, we similarly place a single generator (denoted by also $\alpha_i$ by abuse of notation) in grading $\nu(\alpha_i) + 1$. We next define our differential to be identically zero on $v_i$, and set 
\[
\partial \alpha_i = U^{(\nu(v_i)-\nu(\alpha_i))/2}v_i + U^{(\nu(v_{i+1})-\nu(\alpha_i))/2}v_{i+1}
\]
for $\alpha_i$. Finally, we extend it to the entire complex linearly and $\mathbb{F}[U]$-equivariantly.

There is an involution $J$ on $C_*(R)$ induced by $J$ on $R$, and it is given by sending $v_i$ to $v_{n-i+1}$, $\alpha_i$ to $\alpha_{n-i}$, and extending linearly and $\mathbb{F}[U]$-equivariantly. See Figure \ref{fig:complex}. One can check that the pair $(C_*(R), J)$ forms an $\iota$-complex. Moreover, $J$ also induces the involution $J_*$ on $\mathbb{H}^-(R)$. See \cite[Section~4]{DM19} and \cite[Section~2.3]{DS19} for more details.

In \cite[Section~6]{DM19}, Dai and Manolescu introduced the notion of a \emph{monotone graded subroot} of a symmetric graded root $R$, to simply determine the local equivalence class of a Seifert fibered sphere. 

Let $(R, \nu, J)$ be a symmetric graded root, and let $\mathcal{G}_R$ be the image of the grading function $\nu: R \to \Z$. For a positive integer $n$, let $h_1, \ldots, h_n$ (resp. $r_1, \ldots, r_n$) be a sequence of decreasing (resp. increasing) even integers in $\mathcal{G}_R$ such that $h_n \geq r_n$. Then a \emph{monotone graded subroot} $$M=M(h_1,r_1; \ldots; h_n,r_n)$$ of $R$ equipped with the same involution $J$ is constructed in the following fashion:

\begin{enumerate}
\item Form the stem of $M$ by drawing a single infinite tower with the uppermost vertex in grading $r_n$,
\item If $h_n > r_n$, use leaves $v_i$ and $J v_i$ of $R$ to introduce leaves $v_i$ and $J v_i$ of $M$ in grading $h_i$ for each $1 \leq i < n$, 
\item Connect $v_i$ and $J v_i$ to the stem by using two paths meeting the stem in grading $r_i$ for each $1 \leq i < n$,
\item If $h_n = r_n$, then set $v_n = J v_n$ at grading $r_n$ in the second step.
\end{enumerate}

By construction, the standard complex $(C_*(M), J)$ of a monotone graded subroot $M$ is a subcomplex of $(C_*(R), J)$, and it also forms an $\iota$-complex itself, see \cite[Section~6]{DM19}. However, this subcomplex is sufficient to describe the local equivalence class of a Seifert fibered sphere completely, as proven by Dai and Manolescu.

\begin{theorem}[Theorems~4.5 and 6.1, \cite{DM19}]
\label{thm-local_eq_combined}
Let $Y$ be a Seifert fibered sphere with a symmetric graded root $R$. Let $M$ be a monotone graded subroot of $R$. Then the $\iota$-complexes $(CF^-(Y), \iota)$, $(C_* (R), J)$, and $(C_* (M), J)$ are locally equivalent in $\mathfrak{I}$.
\end{theorem}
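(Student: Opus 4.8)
The statement repackages two results of Dai--Manolescu, namely \cite[Theorem~4.5]{DM19} and \cite[Theorem~6.1]{DM19}, so I would prove it in two steps joined by transitivity of local equivalence: first that $(CF^-(Y),\iota)$ is locally equivalent to the standard complex $(C_*(R),J)$, and then that $(C_*(R),J)$ is locally equivalent to $(C_*(M),J)$ for the monotone graded subroot $M$.

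For the first step the input is the graded $\mathbb{F}[U]$-module isomorphism $F\colon HF^-(Y)\xrightarrow{\cong}\mathbb{H}^-(R)[\sigma]$ with $F\circ\iota_*=J_*\circ F$ recorded above, together with \cite[Lemma~4.3]{DM19}, which says that $C_*(R)$ is a finitely generated free $\mathbb{F}[U]$-complex with $H_*(C_*(R))\cong\mathbb{H}^-(R)$ on which the combinatorial involution $J$ induces $J_*$. I would build a chain map $g\colon C_*(R)\to CF^-(Y)$ by hand: since $C_*(R)$ is free over $\mathbb{F}[U]$ on the leaves $v_i$ and the angles $\alpha_i$, send each cycle $v_i$ to a cycle of $CF^-(Y)$ representing $F^{-1}$ of the homology class of $v_i$ (gradings match because $F$ is grading preserving up to the fixed shift $\sigma$), and then send each $\alpha_i$ to any chain bounding $g(\partial\alpha_i)$, which exists because $\partial\alpha_i$ is a boundary in $C_*(R)$ and hence $g(\partial\alpha_i)$ is null-homologous. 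Extending $\mathbb{F}[U]$-linearly gives a chain map realizing $F^{-1}$ on homology. Since $CF^-(Y)$ and $C_*(R)$ are both finitely generated free complexes over the graded principal ideal domain $\mathbb{F}[U]$, a chain map inducing an isomorphism on homology is automatically a chain homotopy equivalence, so $g$ has a homotopy inverse $g'$ realizing $F$, and, forgetting involutions, the two $\iota$-complexes are already homotopy equivalent.

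The remaining point---and the one I expect to be the main obstacle---is to make $g$ and $g'$ respect the involutions up to the chain homotopy that the definition of local equivalence allows. From $F\circ\iota_*=J_*\circ F$ one checks that $g\circ J$ and $\iota\circ g$ (and likewise $g'\circ\iota$ and $J\circ g'$) induce the same map on homology; but two chain maps between free $\mathbb{F}[U]$-complexes that agree on homology need not be chain homotopic, the obstruction living in an $\operatorname{Ext}^1$ term. To kill it one must exploit the freedom left in the choices of the bounding chains $g(\alpha_i)$ together with the fact that $\iota$ is only a homotopy involution, modifying $g$ (and $g'$) until the required $J$-versus-$\iota$ equivariance holds. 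This careful adjustment is the substance of \cite[Section~4]{DM19}.

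For the second step, recall from the construction above that $C_*(M)$ sits inside $C_*(R)$ as a $J$-invariant subcomplex, so the inclusion $C_*(M)\hookrightarrow C_*(R)$ is already a grading-preserving $J$-equivariant chain map, and it induces an isomorphism after inverting $U$ because a monotone graded subroot by design retains the infinite stem that generates $U^{-1}H_*\cong\mathbb{F}[U,U^{-1}]$. For the reverse map $C_*(R)\to C_*(M)$ I would induct on the number of leaves of $R$: whenever $R$ is not yet monotone---some leaf or angle violating the prescribed monotonicity of the heights $h_1,\dots,h_n$ and $r_1,\dots,r_n$---I would write down an explicit $J$-equivariant chain map collapsing the offending symmetric pair of leaves into suitable $U$-multiples of surviving leaves or stem vertices, check that it is an isomorphism on localized homology, and iterate until the image is $C_*(M)$. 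Matching the data $(h_1,r_1;\dots;h_n,r_n)$ read off from $R$ against these successive collapses is the bulk of the work, although each collapsing map is essentially forced once $J$-equivariance is imposed. Composing the subcomplex inclusion, this collapse, and the two maps from the first step then finishes the proof.
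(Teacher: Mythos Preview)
The paper does not give its own proof of this theorem: it is stated with attribution to \cite[Theorems~4.5 and 6.1]{DM19} and then used as a black box in the subsequent arguments (notably in Lemma~\ref{lemma:key_lemma2}). So there is nothing in the paper to compare your proposal against.

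That said, your sketch is a faithful outline of the Dai--Manolescu arguments you cite. The two-step decomposition via transitivity is exactly how the result is organized in \cite{DM19}, and your description of the second step---inclusion of $C_*(M)$ as a $J$-invariant subcomplex in one direction, and an inductive sequence of $J$-equivariant collapses of extraneous leaf pairs in the other---matches the proof of \cite[Theorem~6.1]{DM19}. For the first step you correctly identify both the easy part (freeness over $\mathbb{F}[U]$ upgrades a quasi-isomorphism to a homotopy equivalence) and the genuine difficulty (arranging $\iota$-equivariance up to homotopy, where the naive $\operatorname{Ext}^1$ obstruction does not automatically vanish). You are right that this adjustment is the substance of \cite[Section~4]{DM19}, but as written your proposal does not actually carry it out---it names the obstruction and then defers to the reference. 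If you intend this as a self-contained proof rather than an annotated citation, that step needs to be filled in; in \cite{DM19} it is handled by exploiting the specific combinatorial structure of $C_*(R)$ (the generators are explicit and the differential is sparse), which makes the required homotopies constructible by hand rather than by an abstract argument.
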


We now describe the parameterization of a monotone graded subroot for a Seifert fibered sphere by using involutive correction terms $\underline{d}$ and $\overline{d}$. See Figure~\ref{fig:monotone} for an example.

\begin{theorem}[Section 8, \cite{DM19}; Theorem~4.4, \cite{DS19}]
\label{thm:parametrization}
Let $Y$ be a Seifert fibered sphere with a graded root $R$. Then there exists a monotone graded subroot $M = M(h_1,r_1; \ldots; h_n,r_n)$ of $R$ such that $$h_1 = \overline{d}(Y) \quad \text{and} \quad r_n = \underline{d}(Y).$$
\end{theorem}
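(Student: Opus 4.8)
The plan is to extract the required monotone graded subroot directly from the structure of the symmetric graded root $R$, exploiting the combinatorial description of $\underline{d}$ and $\overline{d}$ in terms of $(C_*(R),J)$ given in the previous subsection. First I would recall that, by Theorem~\ref{thm-local_eq_combined}, the $\iota$-complex $(C_*(R),J)$ computes the local equivalence class of $Y$, and that $\overline{d}(Y)$ and $\underline{d}(Y)$ are read off from the induced involution $J_*$ on $\mathbb{H}^-(R)$ exactly as in \cite{HM17}: $\overline{d}(Y)$ is (a shift of) the maximal grading of a nonzero element in the image of the $U$-nontorsion tower that lies in $\ker(1+J_*)$, while $\underline{d}(Y)$ is the corresponding minimal grading coming from $\operatorname{im}(1+J_*)$. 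Concretely, $\underline{d}(Y)$ equals the grading of the uppermost vertex of the infinite stem of $R$ after the symmetrization forced by $J$, and $\overline{d}(Y)$ equals the grading $\nu(v_1)=\nu(v_n)$ of the outermost (tallest) symmetric pair of leaves.

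The main construction step is then to define $M$ as the monotone graded subroot whose data $(h_1,r_1;\ldots;h_n,r_n)$ records precisely the $J$-symmetric ``silhouette'' of $R$: enumerate the leaves $v_1,\dots,v_n$ of $R$ from left to right, set $h_i=\nu(v_i)$ for the relevant symmetric pairs, and let $r_i$ be the grading of the vertex where the paths from $v_i$ and $Jv_i$ first merge into the common trunk that carries the $U$-tower. One verifies that the resulting sequences are weakly decreasing in the $h$'s and weakly increasing in the $r$'s (this is exactly the monotonicity condition in the definition of a monotone graded subroot, and it holds because of how leaves of a graded root nest), and that $h_n\ge r_n$ holds by construction. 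With this choice, $h_1=\nu(v_1)$ is the top grading of the outermost leaf pair, which is $\overline{d}(Y)$ up to the grading shift $\sigma$, and $r_n=\underline{d}(Y)$ is the grading of the top of the stem; the normalization conventions (fixed in \cite{DS19}, as noted in the Conventions) are chosen so that these equalities hold on the nose without an extra shift. Finally, I would invoke Theorem~4.5 of \cite{DM19} (already incorporated in Theorem~\ref{thm-local_eq_combined} above) to confirm that this particular $M$ is locally equivalent to $(C_*(R),J)$, hence genuinely a monotone graded subroot computing the local class, so that the invariants $\underline{d}$ and $\overline{d}$ it determines agree with those of $Y$.

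The delicate point—where I expect the real work to lie—is showing that one can always choose the merge gradings $r_i$ consistently so that the sequence $r_1\le r_2\le\cdots\le r_n$ is monotone \emph{and} simultaneously $h_1\ge h_2\ge\cdots\ge h_n$, i.e.\ that the ``profile'' of $R$ seen from its symmetry axis is genuinely monotone after discarding the non-contributing leaves. This is not automatic from the definition of a graded root: a priori a tall leaf could sit inside a short one. The resolution is the monotonicity lemma for Seifert graded roots from \cite[Section~6, Section~8]{DM19} together with the semicontinuity of the tau-function underlying $\nu$, which guarantees that, after passing to the locally equivalent monotone subroot, the surviving leaves are totally ordered by height in the same order as their merge points. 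Once that structural fact is in hand, the identification $h_1=\overline d(Y)$, $r_n=\underline d(Y)$ is immediate from the definitions of the involutive correction terms, and the theorem follows. I would also remark that the $\overline d$--$\underline d$ inequality $\underline d(Y)\le \overline d(Y)$ is recovered as the special case $h_n\ge r_n$ of the construction, providing a consistency check.
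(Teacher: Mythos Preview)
The paper does not prove this theorem; it is quoted from the literature (Dai--Manolescu \cite{DM19}, Section~8, and Dai--Stoffregen \cite{DS19}, Theorem~4.4), as the attribution in the theorem header indicates, and is used as a black box. There is therefore no argument in the paper to compare your proposal against.

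Your sketch is in the spirit of those original references, but one point is imprecise. You identify $\overline{d}(Y)$ with the grading of the \emph{outermost} leaf pair $v_1,Jv_1=v_n$ of $R$ and call this pair ``tallest''. For an arbitrary symmetric graded root these two notions need not coincide: $\overline{d}$ is the grading of the highest leaf, wherever it sits. The construction in \cite{DM19} does not simply read off the silhouette of $R$; it selects leaves greedily---starting from a leaf of maximal grading and, moving inward, discarding any leaf whose height or merge-grading would violate monotonicity---and it is this pruning that produces a subroot $M$ whose outermost leaf is tallest and whose $r_i$ are increasing. You flag exactly this as ``the delicate point'' and then invoke \cite{DM19} for its resolution, so in the end your proposal cites the same source the paper does rather than supplying an independent argument. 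Once the pruned $M$ is in hand, your identifications $h_1=\overline{d}(Y)$ and $r_n=\underline{d}(Y)$ are correct, and the local-equivalence step via Theorem~\ref{thm-local_eq_combined} is the right way to justify that $M$ carries the same involutive correction terms as $Y$.
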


\begin{figure}[htbp]
\centering
\includegraphics[width=0.3\columnwidth]{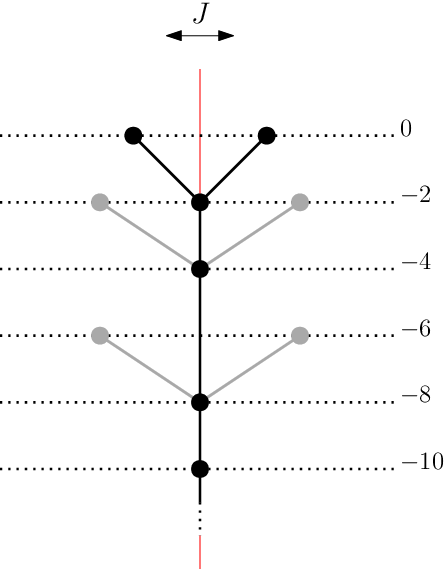}
\caption{For the Brieskorn sphere $\Sigma(3,4,13)$, the monotone graded subroot $M = M(0,-2)$ with the involution $J$ is drawn in black, compare with Figure~\ref{fig:graded_root}.}
\label{fig:monotone}
\end{figure}

By following the convention in \cite{DS19}, from now on, we will refer to the standard complex $(C_* (M), J)$ as just the monotone graded subroot $M$ for simplicity.

Now we present a theorem, provided by Dai and Stoffregen \cite{DS19}, to describe how a monotone graded subroot decomposes into simpler ones up to local equivalence.

\begin{theorem}[Theorem 4.2, \cite{DS19}]
\label{thm:decomposition}
For any monotone graded subroot $M=M(h_1,r_1; \ldots; h_n,r_n)$, we have the local equivalence: $$M = \left ( \sum_{i=1}^{n} M(h_i,r_i) \right ) - \left ( \sum_{i=1}^{n-1} M(h_{i+1},r_i) \right ) .$$
\end{theorem}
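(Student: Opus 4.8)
The plan is to prove the local equivalence by exhibiting explicit grading-preserving $\iota$-equivariant chain maps in both directions between $M = M(h_1,r_1;\ldots;h_n,r_n)$ and the tensor product complex
\[
N \ := \ \left( \bigotimes_{i=1}^n M(h_i,r_i) \right) \otimes \left( \bigotimes_{i=1}^{n-1} M(h_{i+1},r_i)^\vee \right),
\]
since local equivalence is precisely the existence of such maps inducing isomorphisms after inverting $U$, and the group operation is tensor product with inverses given by duals. First I would recall the normal form for $M(h,r)$: it is the $\iota$-complex with an infinite $U$-tower generator $v$ (equal to $Jv$) in grading $r$ together with, when $h>r$, a pair of leaves $a, Ja$ in grading $h$ and an angle generator $b$ in grading $r+1$ with $\partial b = U^{(h-r)/2}(a + Ja)$ glued to the stem at grading $r$ — in other words, each $M(h,r)$ is one of the "almost-local" building blocks whose self-conjugate and non-self-conjugate parts are easy to track. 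Tensoring these together, $N$ has a combinatorial basis indexed by choices of $v$ or $\{a, Ja, b\}$ in each factor, and the key point is that most of these basis elements are either $U$-torsion or cancel in pairs under the differential, so that $H_*(N)$ localizes correctly and its "graded root skeleton" can be read off.

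The main computation is to identify, inside $N$, a distinguished sub-quotient that is isomorphic as an $\iota$-complex to the standard complex $C_*(M)$. The heuristic is additive: a monotone graded subroot $M(h_1,r_1;\ldots;h_n,r_n)$ has leaves at heights $h_1,\ldots,h_n$ attached at valley-heights $r_1,\ldots,r_{n-1}$ plus a stem top at $r_n$, and the "valley-height $r_i$ leaf pair at height $h_{i+1}$" contributed by the dual factor $M(h_{i+1},r_i)^\vee$ is designed to cancel against the spurious height-$h_{i+1}$ leaf in the tensor product of the $M(h_j,r_j)$'s, leaving exactly one leaf at each $h_i$ attached at the correct valley $r_i$. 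Concretely I would induct on $n$: the base case $n=1$ is a tautology, and for the inductive step I would split off the factor $M(h_n,r_n)$ (and, via the $-M(h_n,r_{n-1})$ term, graft it at valley $r_{n-1}$ rather than $r_{n-2}$) and invoke a "local gluing" lemma along the lines of the Dai–Manolescu analysis of monotone subroots, showing $M(h_1,r_1;\ldots;h_n,r_n)$ is locally equivalent to $M(h_1,r_1;\ldots;h_{n-1},r_{n-1}) + M(h_n,r_n) - M(h_n,r_{n-1})$, then apply the inductive hypothesis to the $(n-1)$-term subroot. One must check at each stage that the chain maps respect both the internal $\Z$-grading (all exponents of $U$ appearing are non-negative integers precisely because the $h_i$ decrease, the $r_i$ increase, and $h_n\ge r_n$) and the involution $J$ (which swaps $a\leftrightarrow Ja$ in each $M(h,r)$ and extends diagonally to tensor products and duals).

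I expect the principal obstacle to be the bookkeeping of the $\iota$-equivariance together with gradings in the tensor product: after tensoring $2n-1$ building blocks one has a complex with on the order of $3^{2n-1}$ basis monomials, and one must produce an honest chain map — not merely a quasi-isomorphism — that commutes with $J$ on the nose (or up to a controlled homotopy, which for $\iota$-complexes is what "$\iota$-equivariant" permits). The cleanest route is probably not to write the maps on all of $N$ but to first reduce $N$ to a small model by cancelling acyclic summands (using the standard "cancellation lemma" for chain complexes over $\mathbb{F}[U]$, being careful that each cancellation is $J$-equivariant or can be symmetrized), and only then match the surviving model with $C_*(M)$; the subtlety there is that a naive cancellation can destroy $J$-equivariance, so cancellations must be performed in $J$-orbits, and the self-conjugate generators (the $v=Jv$ towers and, when some $h_i=r_i$, the collapsed leaves) need separate handling. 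Once the reduced model is pinned down, verifying that the two comparison maps localize to isomorphisms is immediate since both sides have $U^{-1}H_*\cong \mathbb{F}[U,U^{-1}]$ and the maps are the identity on the bottom tower.
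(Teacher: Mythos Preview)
The paper does not contain its own proof of this theorem: it is quoted verbatim as Theorem~4.2 of \cite{DS19} and used as a black box in the preliminaries section. There is therefore nothing in the present paper to compare your proposal against.

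That said, your inductive strategy --- peel off the innermost leaf via a one-step local equivalence
\[
M(h_1,r_1;\ldots;h_n,r_n)\ \sim\ M(h_1,r_1;\ldots;h_{n-1},r_{n-1}) + M(h_n,r_n) - M(h_n,r_{n-1})
\]
and then recurse --- is essentially the route taken in the original reference \cite{DS19}, where the one-step splitting is isolated as a separate lemma and the full decomposition follows by iterating it. Your remarks about performing cancellations in $J$-orbits and handling the self-conjugate generators separately are the right concerns; in \cite{DS19} these are absorbed into the explicit construction of the local maps for the one-step splitting rather than into a generic cancellation procedure on the full tensor product, which keeps the bookkeeping lighter than the $3^{2n-1}$-monomial picture you sketch.
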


Relying on the above results, Dai and Stoffregen further proved the following theorem. Recall that $B(0) = \Sigma(2,3,5)$, and $B(n) = \Sigma(2n+1, 4n+1, 4n+3)$ for $n \geq 1$.

\begin{theorem}[Theorem~1.1, \cite{DS19}]
\label{thm:basis}
The image $\mathfrak{I}_{SF}$ of the subgroup $\Theta_{SF}^3$ generated by the Seifert fibered spheres under the homomorphism $h$ is infinitely generated, i.e., $$\mathfrak{I}_{SF} = \langle h(B(n)) \rangle \cong \Z^\infty.$$ 
\end{theorem}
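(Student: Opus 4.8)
The plan is to first establish the equality $\mathfrak{I}_{SF} = \langle h(B(n))\rangle$ — the inclusion $\supseteq$ being trivial, since each $B(n)$ is Seifert fibered — and then the freeness $\langle h(B(n))\rangle \cong \Z^\infty$ separately. For the inclusion $\mathfrak{I}_{SF} \subseteq \langle h(B(n))\rangle$, note that an arbitrary element of $\mathfrak{I}_{SF}$ has the form $\sum_i \pm h(Y_i)$ with the $Y_i$ Seifert fibered spheres. By Theorem~\ref{thm-local_eq_combined}, each $h(Y_i)$ is represented by the standard complex of a monotone graded subroot $M_i = M(h_1^i,r_1^i;\ldots;h_{m_i}^i,r_{m_i}^i)$ of the graded root of $Y_i$, and Theorem~\ref{thm:decomposition} rewrites $[M_i]$ as an explicit integer combination of classes of \emph{elementary teeth} $M(a,b)$ with $a \geq b$ even. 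So $\mathfrak{I}_{SF}$ is generated by the classes $[M(a,b)]$ of the elementary teeth occurring inside graded roots of Seifert fibered spheres, and it suffices to place each such tooth inside $\langle h(B(n))\rangle$.

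To do that I would first pin down $h(B(0)) = h(\Sigma(2,3,5))$: a short computation identifies $HF^-(\Sigma(2,3,5))$ with a single $\mathbb{F}[U]$-tower carrying the trivial involution, so $h(B(0))$ is a graded tower and tensoring with its powers realizes arbitrary even grading shifts; hence $[M(a,b)] \equiv [M(0,b-a)] \pmod{\langle h(B(0))\rangle}$, tensoring with a tower merely translating gradings. This reduces matters to the normalized teeth $M(0,-2k)$, $k \geq 1$. Next I would run N\'emethi's algorithm \cite{Nem05} (see also \cite{DM19}) together with the parametrization of Theorem~\ref{thm:parametrization} on $B(k) = \Sigma(2k+1,4k+1,4k+3)$ and verify that its monotone graded subroot is exactly the single tooth $M(0,-2k)$, so that $h(B(k)) = [M(0,-2k)]$ by Theorem~\ref{thm-local_eq_combined}. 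Combining these two reductions, $[M(a,b)] \in \langle h(B(0)), h(B((a-b)/2))\rangle$ for every elementary tooth, and therefore $\mathfrak{I}_{SF} = \langle h(B(n))\rangle$.

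There then remains the freeness statement: no nontrivial relation $\sum_n c_n\,h(B(n)) = 0$ holds in $\mathfrak{I}$. Here I would aim to construct, for each $N$, a homomorphism $\varphi_N \colon \mathfrak{I} \to \Z$ (or into an ordered abelian group) vanishing on $h(B(n))$ for $n < N$ but not on $h(B(N))$; any such ``triangular'' family of homomorphisms immediately yields linear independence of the $h(B(n))$. The natural source for the $\varphi_N$ is the combinatorics of the model complexes $M(0,-2k)$ — for instance, invariants measuring the minimal $U$-divisibility forced on a distinguished homology class by a grading-preserving $\iota$-equivariant map — which one then has to evaluate both on the $M(0,-2k)$ and, crucially, on their tensor products. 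Alternatively, one could invoke the $\phi_n$-invariants of \cite{DHST23}, which factor through $\mathfrak{I}$, once the required triangular pattern of their values on the $h(B(n))$ is verified.

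The step I expect to be the main obstacle is this last one. Ruling out that some combination $\#_n c_n B(n)$ has locally trivial $\iota$-complex amounts to ruling out cancellation when the elementary teeth $M(0,-2k)$ are combined under tensor product, and this requires a careful analysis of the possible grading-preserving $\iota$-equivariant maps between such tensor products — keeping track of the gradings of all generators, the action of $J$, and the $U$-powers appearing in the differentials. This is the technical heart of \cite{DS19}. By comparison, the graded-root computation for $B(k)$ is nontrivial but amounts to a routine run of N\'emethi's algorithm, and the remaining reductions are formal once Theorems~\ref{thm-local_eq_combined}, \ref{thm:parametrization}, and \ref{thm:decomposition} are in hand.
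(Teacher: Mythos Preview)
The paper does not actually prove this theorem: it is stated as a citation of \cite[Theorem~1.1]{DS19}, introduced by the sentence ``Relying on the above results, Dai and Stoffregen further proved the following theorem,'' and no argument is given. So there is no proof in the paper to compare your proposal against.

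That said, your outline is a reasonable sketch of the strategy of \cite{DS19}: reduce via Theorems~\ref{thm-local_eq_combined} and \ref{thm:decomposition} to elementary one-tooth roots, use the grading-shift action of $h(B(0))$ to normalize, identify the normalized teeth with the $h(B(n))$, and then establish freeness by constructing a triangular family of homomorphisms out of $\mathfrak{I}$. One small correction: in the conventions of this paper (Theorem~\ref{thm:monotone_roots}), the monotone graded subroot of $B(n)$ is $M(2n,0)$, not $M(0,-2n)$; your normalization $[M(a,b)] \equiv [M(0,b-a)]$ modulo $\langle h(B(0))\rangle$ is fine, but then $h(B(k))$ equals $[M(2k,0)]$, which becomes $[M(0,-2k)]$ only after subtracting $k\cdot h(B(0))$. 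This does not affect the generation statement but should be phrased accordingly. The freeness step, as you correctly anticipate, is the genuine content of \cite{DS19} and is not reproduced here.
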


For any Seifert fibered sphere $Y$, there exists a unique homology sphere $Z_Y$ expressed as the linear combination of the basis elements $\{B(n)\}$ such that $h(Y)=h(Z_Y)$, i.e., the homology sphere $Y \# -Z_Y$ lies in $\operatorname{ker}(h)$. We call such $Z_Y$ the \emph{associated homology sphere to $Y$ with respect to $\{B(n)\}$}. We omit the reference to the basis when it is clear from the context. In the final section of this article, beyond merely establishing the existence of $Z_Y$ and the linear independence of $Y$ and $Z_Y$ in $\Theta_\Q^3$, we will explicitly construct $Z_Y$ for a specific family $Y$ by applying Theorem \ref{thm:decomposition}.

\subsection{Local equivalence in Seiberg--Witten Floer theory}

Based on the work of Manolescu \cite{Man14}, Stoffregen \cite{Sto20} previously defined an analogous notion of the local equivalence class for homology spheres in Seiberg--Witten Floer theory, and computed these classes for Seifert fibered spheres. In this subsection, we present the equivalence between two notions of the local equivalence class in Seiberg--Witten Floer theory and the one in involutive Heegaard Floer theory for each Seifert fibered sphere.

As described in \cite{Sto20}, the local equivalence class of the $\operatorname{Pin}(2)$-equivariant Seiberg--Witten Floer stable homotopy type $SWF(Y)$ and of its associated chain complex of its spectrum also form abelian groups $\mathfrak{LE}$ and $\mathfrak{CLE}$, respectively, together with the homomorphisms: $$\Theta_\Z^3 \rightarrow \mathfrak{LE}\rightarrow\mathfrak{CLE},$$ where  

\begin{itemize}
    \item the $\kappa$-invariant \cite{Man14} and $\kappa o_i$-invariants \cite{Lin15} for $i = 0, \ldots, 7$ factor through $\mathfrak{LE}$;
    \item the $\delta$-invariant \cite{Fro10}, the $\alpha$-, $\beta$-, and $\gamma$-invariants \cite{Man16}, and the $\underline{\delta}$- and $\overline{\delta}$-invariants \cite{Sto17b} factor through $\mathfrak{CLE}$.
\end{itemize}

\noindent See \cite[Section~3]{Man18} for further details.

Recall that $SWF(Y)$ recovers the $\operatorname{Pin}(2)$-equivariant Seiberg--Witten Floer homology of $Y$.\footnote{Here, the Seiberg--Witten Floer homology means the $\operatorname{Pin}(2)$-equivariant Borel homology $\widetilde{H}_*^{\operatorname{Pin(2)}}(SWF(Y))$ of the space corresponding to $SWF(Y)$. By \cite{LM18, Pan25}, it turned out that this homology is isomorphic to the $\operatorname{Pin(2)}$-monopole Floer homology $\widecheck{HS_*}(Y)$ defined by \cite{Lin18}, based on \cite{KM07}.} Motivated from the facts that $\mathbb{H}^-(R)$ is isomorphic to $HF^-(Y)$ \cite{Nem05}, and hence to the Seiberg--Witten Floer homology \cite{LM18,KLT20}, Dai, Sasahira, and Stoffregen constructed the $\operatorname{Pin}(2)$-equivariant spectrum $\mathcal{H}(R)$ of the graded root $R$ of $Y$, and proved the following:

\begin{theorem}[Theorems~1.2 and 7.3, \cite{DSS23}]
\label{thm:DSS}
  Let $Y$ be a Seifert fibered sphere with a graded root $R$. Then there is a $\operatorname{Pin}(2)$-equivariant homotopy equivalence:
  $$\mathcal{H}(R)\simeq SWF(Y).$$
  In particular, for two Seifert fibered spheres $Y_1$ and $Y_2$, the following are equivalent:
  \begin{enumerate}
    \item $h(Y_1)=h(Y_2)$ in $\mathfrak{I}$.
    \item $[SWF(Y_1)] = [SWF(Y_2)]$ in $\mathfrak{LE}$.
  \end{enumerate}
\end{theorem}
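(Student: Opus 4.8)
\emph{Proof sketch.} The plan is to realize the graded-root data geometrically: first build $\mathcal{H}(R)$ as an explicit $\operatorname{Pin}(2)$-CW spectrum attached to the symmetric graded root $(R,\nu,J)$, designed so that its cellular chain complex \emph{is} the standard complex $(C_*(R),J)$, and then identify $\mathcal{H}(R)$ with $SWF(Y)$ by exploiting the fact that the Seiberg--Witten flow on a Seifert fibered sphere is, up to finite-dimensional approximation, combinatorial. For the construction I would let the infinite downward stem of $R$ contribute the $\operatorname{Pin}(2)$-fixed part of the spectrum (the tower of representation spheres carrying the reducible, whose bottom grading records $\delta(Y)$), and for each symmetric pair of leaves $\{v_i, Jv_i\}$ at grading $h_i$ attach $\operatorname{Pin}(2)$-cells realizing that pair with $j\in\operatorname{Pin}(2)$ swapping the two summands (or fixing $v_n$ when $h_n=r_n$), glued to the stem along the merge vertices in gradings $r_i$, with an overall (de)suspension by the appropriate representation so that $\widetilde{H}^{\operatorname{Pin}(2)}_*(\mathcal{H}(R))\cong(\mathbb{H}^-(R),J_*)$. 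By construction the $\operatorname{Pin}(2)$-cellular chain complex of $\mathcal{H}(R)$, with its induced involution, equals $(C_*(R),J)$.

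The identification $\mathcal{H}(R)\simeq SWF(Y)$ I would carry out in two stages. First, match the underlying $S^1$-equivariant homotopy types: for Seifert fibered spheres the irreducible Seiberg--Witten trajectories are understood explicitly via the Mrowka--Ozsv\'ath--Yu description of the moduli spaces, so that finite-dimensional approximation exhibits the $S^1$-equivariant Conley index of $Y$ as a wedge-and-cone complex organized precisely by the graded root — this is essentially the spectrum-level lift of the Ozsv\'ath--Szab\'o/N\'emethi computation of $HF^-(Y)\cong\mathbb{H}^-(R)$. Second, upgrade to $\operatorname{Pin}(2)$: the complex-conjugation symmetry of the Seiberg--Witten equations acts on the critical set and on the reducible exactly as the graded-root involution $J$ acts, and the obstructions to promoting the $S^1$-equivalence to a $\operatorname{Pin}(2)$-equivalence lie in equivariant cohomotopy groups of mapping spectra which vanish because the cells of a graded-root complex all sit in a controlled dimension range relative to the fixed-point set (the tree/monotone shape of $R$). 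This yields the asserted $\operatorname{Pin}(2)$-homotopy equivalence.

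For the ``in particular'' part I would argue formally. Since $SWF(Y)\simeq\mathcal{H}(R)$ $\operatorname{Pin}(2)$-equivariantly, the class $[SWF(Y)]\in\mathfrak{LE}$ depends only on $R$; likewise $h(Y)=(C_*(R),J)\in\mathfrak{I}$ depends only on $R$ by Theorem~\ref{thm-local_eq_combined}. So it suffices to show that for symmetric graded roots $R_1,R_2$ one has $(C_*(R_1),J)$ locally equivalent to $(C_*(R_2),J)$ if and only if $\mathcal{H}(R_1)$ is $\operatorname{Pin}(2)$-locally equivalent to $\mathcal{H}(R_2)$. In one direction, a $\operatorname{Pin}(2)$-local equivalence $\mathcal{H}(R_1)\to\mathcal{H}(R_2)$ is $\operatorname{Pin}(2)$-homotopic to a cellular one, which on cellular chains is precisely an $\iota$-equivariant chain map $C_*(R_1)\to C_*(R_2)$, and it is a local equivalence on homology after inverting $U$; together with the reverse map this is a local equivalence of $\iota$-complexes. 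In the other direction, a pair of $\iota$-equivariant chain maps witnessing a local equivalence is realized by $\operatorname{Pin}(2)$-maps of spectra — again by obstruction theory, the chain map being the primary obstruction datum and the higher obstructions vanishing for these representation-sphere complexes. Combining the two directions gives the equivalence of (1) and (2).

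The main obstacle is the $\operatorname{Pin}(2)$-equivariant identification of the Seiberg--Witten Conley index of a Seifert fibered sphere with the combinatorial model $\mathcal{H}(R)$: one must verify that the complex-conjugation symmetry on the Seiberg--Witten flow is, up to $\operatorname{Pin}(2)$-homotopy, the graded-root involution $J$, with no hidden higher equivariant obstructions. This rests on the rigidity of Seiberg--Witten theory on Seifert manifolds — the moduli spaces are explicit enough to read off both the critical set and the $j$-action — together with the vanishing of the relevant equivariant obstruction groups forced by the monotone tree structure of graded roots. By comparison, the $S^1$-level statement and the algebraic deduction of the ``in particular'' are routine.
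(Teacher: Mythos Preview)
The paper does not prove this theorem. Theorem~\ref{thm:DSS} is quoted from \cite{DSS23} (their Theorems~1.2 and~7.3) and used here purely as a black-box input to the proof of Theorem~\ref{thm-A}; no argument for it appears anywhere in the present paper. There is therefore nothing in this paper to compare your proposal against.

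As a standalone sketch of the \cite{DSS23} argument your outline is broadly on target --- constructing a $\operatorname{Pin}(2)$-CW model $\mathcal{H}(R)$ from the symmetric graded root and then matching it with the Seiberg--Witten Conley index via the explicit Mrowka--Ozsv\'ath--Yu description of the flow on Seifert fibered spaces is the correct shape of the proof --- but the step you yourself flag as the main obstacle (upgrading from an $S^1$-equivalence to a $\operatorname{Pin}(2)$-equivalence and verifying that the higher equivariant obstructions vanish) is where essentially all of the technical content of \cite{DSS23} lives, and your sketch names that work rather than supplying it. Likewise, in the ``in particular'' clause, realizing an arbitrary $\iota$-equivariant chain-level local equivalence by a $\operatorname{Pin}(2)$-map of spectra via obstruction theory is not as routine as you suggest; one has to argue carefully that the relevant obstruction groups vanish, which again is genuine content rather than a formality.
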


In particular, if the associated homology sphere $Z_Y$ to a Seifert fibered sphere $Y$ is itself a Seifert fibered sphere, then the homology sphere $Y \# -Z_Y$ in $\ker (h)$ also lies in the kernel of $\Theta^3_\Z \to \mathfrak{LE}$. 

\subsection{Filtered instanton Floer homology and \texorpdfstring{$r_s$}{rs}-invariants}

In this subsection, we briefly recall instanton Floer homology and its filtered version by Nozaki, Sato, and Taniguchi \cite{NST24}. We follow the conventions and notations in \cite{NST24}.

For a given homology sphere $Y$, the Chern--Simons functional $cs_Y$ is defined on the space $A(Y)$ of $SU(2)$-connections on the product principal $SU(2)$-bundle $P_Y = Y \times SU(2)$.\footnote{We write $=$ rather than $\cong$ to emphasize the choice of the trivialization of the product bundle $P_Y$. This makes the choice of the reducible critical point $\theta$, which will be defined later, canonical.} Since $cs_Y$ is invariant under the action of the group $\operatorname{Map}_0(Y, SU(2))$ consisting of degree $0$ maps in the gauge group, $cs_Y:A(Y)\rightarrow \R$ factors through the configuration space $\widetilde{B}(Y) = A(Y) / \operatorname{Map}_0(Y, SU(2))$. We consider the subset $\widetilde{B}^*(Y)$ which consists of the orbits of \emph{irreducible} connections, namely the stabilizer subgroup is $\{\pm I\}$ where $I$ denotes the constant map to the identity in $SU(2)$.

We fix a Riemannian metric on $Y$ and an appropriate perturbation $\pi$ of $cs_Y$. Then we construct an infinite-dimensional analogue of Morse chain complex of $\widetilde{B}^*(Y)$ using the perturbed Chern--Simons functional $cs_{Y,\pi}$. The set $\widetilde{R}^*_\pi(Y)$ of the irreducible critical points of $cs_{Y,\pi}$ in $\widetilde{B}^*(Y)$, consists of \textit{flat} connections, namely, those with curvature $F_A=0$. The integer-valued Floer index $ind$, which is an analogue of the ordinary Morse index, is also well-defined on $\widetilde{R}^*(Y)$. Then the \emph{instanton Floer chain complex} is defined as:
$$CI_i(Y)=\Z\{a \in \widetilde{R}_\pi^*(Y) \ | \ ind(a) = i\},$$
together with the boundary map $\partial: CI_i(Y) \rightarrow CI_{i-1}(Y)$ given by:
$$\partial (a) = \sum\limits_{\mathclap{\substack{
b \in \widetilde{R}^*_\pi(Y) \\
\mathrm{ind}(b) = i-1}}}\#(M^Y(a, b)/\R)b.$$ Here, $M^Y(a, b)$ denotes the $1$-dimensional moduli space of instantons over the cylinder $\R \times Y$ where its restriction on $Y\times t$ is asymptotically $a$ when $t$ goes to $-\infty$ and $b$ when $t$ goes to $+\infty$. Note that $\R$ acts on $M^Y(a,b)$ by the translation.

The boundary map satisfies $\partial^2 = 0$, so we obtain the \emph{instanton Floer homology} $I_*(Y)$, which does not depend on the choice of Riemannian metric and perturbation, and is a diffeomorphism invariant of $Y$. By taking the dual complex $CI^i(Y)$, one can also obtain the \emph{instanton Floer cohomology} $I^*(Y)$.

Note that the chain complex only involves irreducible critical points. Let $\theta \in A(Y)$ be the product $SU(2)$-connection on $P_Y$ so that $\theta$ is a reducible critical point. Donaldson \cite{Don02} introduced an obstruction cocycle $\theta_Y:CI_1(Y)\rightarrow \Z$ in the first instanton Floer cochain complex $CI^1(Y)$ to count the trajectories from irreducible solutions to $\theta$ as follows:
$$\theta_Y([a]) = \#(M^Y(a, \theta)/\R),$$
where $M^Y(a,\theta)$ is similarly defined to $M^Y(a,b)$. See \cite{NST24} for a precise definition. The cocycle $\theta_Y$ forms a well-defined cohomology class $[\theta_Y]\in I^1(Y)$, which vanishes whenever $Y$ bounds a negative definite $4$-manifold.

On the other hand, Fintushel and Stern \cite{FS90} introduced the filtered instanton Floer homology $I_*^{[r,r+1]} (Y)$ for any fixed real number $r$ whose filtrations are given by the Chern--Simons functional. We recall the recent work of Nozaki, Sato, and Taniguchi \cite{NST24} which extends the work of Fintushel and Stern to the filtered instanton Floer homology $I_*^{[s,r]} (Y)$ for a more general interval $[s,r]$. We write the set of real numbers:
$$\Lambda_Y = cs_Y(\widetilde{R}_\pi(Y)), \quad \Lambda_Y^* = cs_Y(\widetilde{R}^*_\pi(Y)), \quad \text{and} \quad  \R_Y = \R - \Lambda_Y,$$ where $\widetilde{R}_\pi(Y)$ is the set of the orbits of all critical points of $cs_{Y,\pi}$, whether irreducible or reducible.

\begin{definition}
  Let $s\in [-\infty, 0]$, $r\in \R_Y$ and $\lambda_Y = \min\{|a-b| |a \neq b, a, b\in \Lambda_Y\}$. The \emph{filtered instanton Floer chain complex} is defined as:
  $$CI_i^{[s, r]}(Y) = \begin{cases}
    \Z\{a \in \widetilde{R}_{\pi}^*(Y) \ | \ ind(a) = i, cs_{Y, \pi}(a)\in (s, r)\} & \text{if $s\in \R_Y$,}\\
    \Z\{a \in \widetilde{R}_{\pi}^*(Y) \ | \ ind(a) = i, cs_{Y, \pi}(a)\in (s-\lambda_Y/2, r)\}& \text{if $s\in \Lambda_Y$,}
  \end{cases}$$ together with the boundary map given by the restriction of $\partial$ defined on the ordinary $CI_i(Y)$.
\end{definition}

\noindent Since $\partial^2=0$, we have the \textit{filtered instanton Floer homology} $I_*^{[s,r]}(Y)$, which is a diffeomorphism invariant of $Y$. Technically, the chain complex can be defined for arbitrary intervals, however, we avoid the case when $r\in \Lambda_Y$ to make the canonically defined map $i_{[s, r]}^{[s',r']}:CI_i^{[s,r]}(Y)\rightarrow CI_i^{[s',r']}(Y)$ behave well. More precisely,

\begin{lemma}[Lemma 2.9, \cite{NST24}]
\label{lemma-shift}
  Suppose $s\le s' \le 0 \le r \le r'$ where $r, r'\in \R_Y$. If both $[r, r']$ and $[s, s']$ do not contain any value of $\Lambda_Y^*$, then the map
    $i_{[s, r]}^{[s',r']}:CI_i^{[s,r]}(Y)\rightarrow CI_i^{[s',r']}(Y)$ defined by
    $$ i_{[s,r]}^{[s',r']}(a)= \begin{cases}
      a &\text{if $a\in CI_i^{[s',r']}(Y)$,}\\
      0 &\text{otherwise,}
    \end{cases}$$
    is a chain homotopy equivalence.
\end{lemma}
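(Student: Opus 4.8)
The plan is to show that, under these hypotheses, the two filtered complexes are in fact \emph{equal} as chain complexes, so that $i_{[s,r]}^{[s',r']}$ is literally the identity map; an isomorphism of complexes is a fortiori a chain homotopy equivalence. The underlying reason is that replacing a filtration interval of type $(s,r)$ by one of type $(s',r')$ neither adds nor deletes any irreducible critical point, precisely because neither $[r,r']$ nor $[s,s']$ contains a value of $\Lambda_Y^*$.

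To set this up, write $\ell(x)=x$ if $x\in\R_Y$ and $\ell(x)=x-\lambda_Y/2$ if $x\in\Lambda_Y$, so that $CI_i^{[x,r]}(Y)$ is the free $\Z$-module on the set $\{a\in\widetilde{R}^*_\pi(Y) \ : \ ind(a)=i,\ cs_{Y,\pi}(a)\in(\ell(x),r)\}$. I would first record that, since $s\le s'\le 0$, the hypothesis $[s,s']\cap\Lambda_Y^*=\emptyset$ forces $s,s'\notin\Lambda_Y^*$; and because $\theta$ is the unique reducible critical point, with $cs_Y(\theta)=0$, one has $\Lambda_Y\setminus\Lambda_Y^*\subseteq\{0\}$. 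Hence the $\lambda_Y/2$-shift is actually applied at an endpoint only when that endpoint equals $0$ (recall $0\in\Lambda_Y$ always).

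The main step is to check that the symmetric difference of the intervals $(\ell(s),r)$ and $(\ell(s'),r')$ contains no element of $\Lambda_Y^*$. Setting $\ell_-=\min\{\ell(s),\ell(s')\}$ and $\ell_+=\max\{\ell(s),\ell(s')\}$, this symmetric difference is contained in $[\ell_-,\ell_+]\cup[r,r']$. The piece $[r,r']$ avoids $\Lambda_Y^*$ by hypothesis. For $[\ell_-,\ell_+]$: if $s,s'\in\R_Y$ then $[\ell_-,\ell_+]=[s,s']$ and we are done; if instead an endpoint equals $0$, the essential observation is that the definition of $\lambda_Y$ as the minimal gap of $\Lambda_Y$ places the shifted cutoff $-\lambda_Y/2$ strictly inside the open gap $(-\lambda_Y,0)$ of $\Lambda_Y$, so that $[\ell_-,\ell_+]\subseteq[s,s']\cup(-\lambda_Y,0)$ and thus still misses $\Lambda_Y^*$; a short case analysis of the position of the cutoffs relative to $-\lambda_Y/2$ (the case $s=s'=0$ being trivial) makes this explicit. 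Consequently the generating sets of $CI_i^{[s,r]}(Y)$ and $CI_i^{[s',r']}(Y)$ coincide for every $i$.

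Finally, the two differentials agree automatically: each is the restriction of the same boundary operator $\partial$, defined by counting the moduli spaces $M^Y(a,b)/\R$, whose counts depend only on the pair $(a,b)$ of critical points, and $(a,b)$ now ranges over the same set. Hence the two complexes are identical and $i_{[s,r]}^{[s',r']}=\mathrm{id}$, which is in particular a chain homotopy equivalence. I expect the only genuinely delicate ingredient to be the endpoint bookkeeping in the third paragraph: correctly tracking the $\lambda_Y/2$-shift and invoking the minimal-gap property to be sure that sliding a cutoff never steps over a critical value of $cs_Y$ not already excluded by the hypotheses. Everything else is formal.
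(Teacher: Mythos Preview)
This lemma is not proved in the present paper; it is quoted as Lemma~2.9 of \cite{NST24} and used as a black box in the preliminaries. There is therefore no argument here for you to compare against.

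On its own merits your strategy is the right one: the hypotheses guarantee that no irreducible critical value lies in $[s,s']\cup[r,r']$, so the two filtration windows select exactly the same generators, the restricted differentials coincide, and $i_{[s,r]}^{[s',r']}$ is literally the identity map. One inaccuracy in your endpoint bookkeeping: the assertion $\Lambda_Y\setminus\Lambda_Y^*\subseteq\{0\}$ is not correct, since in $\widetilde{B}(Y)$ the reducible $\theta$ has a $\Z$-orbit of lifts with Chern--Simons values at every integer, so $s$ or $s'$ may lie in $\Lambda_Y$ without being $0$. This does not break the argument, however. The minimal-gap property you already invoke handles the general case: for any $x\in\Lambda_Y$ the open interval $(x-\lambda_Y,x)$ contains no point of $\Lambda_Y$, so the shifted cutoff $x-\lambda_Y/2$ always lands in a gap, and a short case analysis (using also that two distinct points of $\Lambda_Y$ are at least $\lambda_Y$ apart) shows $[\ell_-,\ell_+]\cap\Lambda_Y^*=\emptyset$ for every configuration of $s,s'$ relative to $\Lambda_Y$.
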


By taking duals of $CI_i^{[s,r]}(Y)$ and of the chain maps, we can also consider the \textit{filtered instanton Floer cochain complex} $CI^i_{[s,r]}(Y)$ and its cohomology $I^i_{[s,r]}(Y)$, and the dual maps on them.

Let $Y_1$ and $Y_2$ be two homology spheres. Now we consider an oriented cobordism $W$ from $Y_1$ to $Y_2$. As in other Floer theories, $W$ induces a chain map between the instanton Floer chain complexes. In particular, when $b_1(W)=b_2^+(W)=0$, the (co)chain maps preserve the Floer indices: \begin{align*}
    CW_i^{[s, r]}&:CI_i^{[s,r]}(Y_1)\rightarrow CI_i^{[s,r]}(Y_2);\\
    CW^i_{[s, r]}&:CI^i_{[s,r]}(Y_2)\rightarrow CI^i_{[s,r]}(Y_1).
\end{align*}

Let $\theta_Y^{[s,r]}$ be the restriction of $\theta_Y$ to $CI_1^{[s,r]}(Y)$. In \cite{NST24}, Nozaki, Sato, and Taniguchi observed that the map $\theta_Y^{[s,r]}$ defines a cocycle in $CI^1_{[s,r]}(Y)$. The cohomology class $[\theta_Y^{[s,r]}]\in I^1_{[s,r]}(Y)$ also behaves well under the induced map $IW^1_{[s,r]}$ by $CW^1_{[s,r]}$ on the first cohomology. More precisely,

\begin{theorem}[Lemma 2.13, \cite{NST24}]
\label{lemma-theta-cobordism}
  If there is a negative definite cobordism $W$ from $Y_1$ to $Y_2$ with $H^1(W;\R)=0$, then for $r\in \R_{Y_1}\cap \R_{Y_2}$,
  $$IW^1_{[s,r]}([\theta_{Y_2}^{[s,r]}]) = |H_1(W;\Z)|[\theta_{Y_1}^{[s,r]}].$$ In particular, if $Y$ bounds a negative definite $4$-manifold, then the obstruction class $[\theta_Y^{[s, r]}]\in I^1_{[s,r]} (Y)$ vanishes.
  \label{thm-IW-theta}
\end{theorem}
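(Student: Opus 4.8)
The plan is to prove the identity on cohomology by establishing it one level down, on cochains. Concretely, I aim to produce a cochain $g\in CI^{0}_{[s,r]}(Y_1)$ with $CW^{1}_{[s,r]}(\theta^{[s,r]}_{Y_2}) = |H_1(W;\Z)|\,\theta^{[s,r]}_{Y_1} + \delta g$ in $CI^{1}_{[s,r]}(Y_1)$, where $CW^{1}_{[s,r]}$ is the cobordism-induced cochain map of the preceding paragraph and $\delta$ is the cochain differential on $CI^{*}_{[s,r]}(Y_1)$; passing to cohomology then gives the theorem, and the ``in particular'' follows by specialising $W$. The hypothesis $r\in\R_{Y_1}\cap\R_{Y_2}$, together with Lemma~\ref{lemma-shift} (which lets me move $s$ to a regular value without affecting the groups or maps involved), is what makes the filtered cobordism maps and the filtered moduli spaces below behave well.

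Fix an irreducible critical point $a\in\widetilde R^{*}_\pi(Y_1)$ of index $1$ with $cs_{Y_1,\pi}(a)\in(s,r)$, and attach cylindrical ends $(-\infty,0]\times Y_1$ and $[0,\infty)\times Y_2$ to $W$, forming $\hat W$. For a generic metric and perturbation, let $\mathcal M(a)$ be the moduli space of finite-energy ASD connections on $\hat W$ asymptotic to $a$ at the $Y_1$-end and to the reducible $\theta$ at the $Y_2$-end, in the instanton charge for which the index formula makes $\dim\mathcal M(a)=1$; here $b_1(W)=b_2^+(W)=0$ enters through the vanishing of $\chi(W)+\sigma(W)$, so that this index agrees with the cylinder computation up to a shift by the charge. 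The filtration bound $cs\in(s,r)$ confines all asymptotic limits to the window. Negative definiteness of $W$ together with $H^1(W;\R)=0$ is used twice: a negative definite form carries no nonzero anti-self-dual class, so every reducible ASD connection on $\hat W$ is flat — these are the $|H_1(W;\Z)|$ flat connections, all isolated and unobstructed — and a charge/energy count keeps interior instanton bubbling out of the relevant moduli space. Hence the Uhlenbeck-type compactification $\overline{\mathcal M(a)}$ is a compact oriented $1$-manifold with boundary.

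The standard gluing theorems identify $\partial\overline{\mathcal M(a)}$ with exactly three kinds of broken configurations. Type (i): a $W$-instanton $a\to b$ followed by a cylinder trajectory $b\to\theta$ over $Y_2$, with $b$ irreducible of index $1$; the signed count is $\sum_b \#M^{W}(a,b)\cdot\#(M^{Y_2}(b,\theta)/\R)=\langle CW^{1}_{[s,r]}(\theta^{[s,r]}_{Y_2}),[a]\rangle$. Type (ii): a cylinder trajectory $a\to a'$ over $Y_1$ followed by a $W$-instanton $a'\to\theta$, with $a'$ irreducible of index $0$; this contributes $\langle\delta g,[a]\rangle$ with $g([a'])=\#M^{W}(a',\theta)$. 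Type (iii): the cylinder trajectory $a\to\theta$ over $Y_1$ — there are $\theta^{[s,r]}_{Y_1}([a])$ of them, counted with sign, after the $\R$-quotient — glued onto the flat reducible locus of $\hat W$, which has $|H_1(W;\Z)|$ points, contributing $|H_1(W;\Z)|\,\theta^{[s,r]}_{Y_1}([a])$. Since the signed count of the boundary of a compact oriented $1$-manifold is zero, these three terms sum to zero; letting $a$ range over the index-$1$ generators yields the claimed cochain identity, hence $IW^{1}_{[s,r]}([\theta^{[s,r]}_{Y_2}])=|H_1(W;\Z)|\,[\theta^{[s,r]}_{Y_1}]$.

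The \textbf{main obstacle} is stratum (iii): gluing along the reducible $\theta$, whose stabiliser is $U(1)$ rather than $\{\pm I\}$, is the delicate point of the analysis, and it is exactly here that the full reducible locus of $W$ — and with it the coefficient $|H_1(W;\Z)|$ — enters; one must pin down the orientation conventions and rule out spurious $U(1)$-families of glued solutions. The other technical point is filtered compactness, i.e.\ showing no energy escapes the interval $(s,r)$, which is where $r\in\R_{Y_1}\cap\R_{Y_2}$ and Lemma~\ref{lemma-shift} are really used. Both are carried out following Donaldson's treatment of the obstruction cocycle \cite{Don02}, now for the $cs$-filtered complexes. Finally, for the last assertion, one reruns the same analysis with $W$ replaced by a negative definite filling $X$ of $Y$ with $H^1(X;\R)=0$, viewed as a cobordism from $Y$ with a single cylindrical end $(-\infty,0]\times Y$; the same boundary count then exhibits $|H_1(X;\Z)|\,\theta^{[s,r]}_{Y}$ as a coboundary, and since $X$ may be taken with $H_1(X;\Z)=0$ in the situations of interest (e.g.\ a negative definite plumbing), $[\theta^{[s,r]}_{Y}]=0$.
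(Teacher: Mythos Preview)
The paper does not give its own proof of this statement: it is quoted verbatim as Lemma~2.13 of \cite{NST24} and used as a black box. So there is no in-paper argument to compare against. Your sketch is the standard one that underlies the cited result --- a one-parameter moduli space on $\hat W$ with limits $(a,\theta)$, Uhlenbeck compactness, and the three boundary strata you list --- and it is correct in outline; this is precisely the Donaldson-style argument \cite{Don02} transported to the $cs$-filtered setting.

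One point to tighten in your ``in particular'' paragraph. Your moduli-space argument on the one-ended filling $\hat X$ yields the cochain identity $|H_1(X;\Z)|\,\theta^{[s,r]}_{Y}=\delta g$, hence $|H_1(X;\Z)|\,[\theta^{[s,r]}_{Y}]=0$. You then dispose of the coefficient by assuming $H_1(X;\Z)=0$ ``in the situations of interest'', which is an extra hypothesis not present in the stated corollary. This is unnecessary: the paper explicitly passes to $\Q$-coefficients immediately after this theorem (``The coefficient ring is taken as $\Q$, since one may want to divide $IW^1_{[s,r]}([\theta_{Y_2}^{[s,r]}])$ by the non-zero integer $|H_1(W;\Z)|$\ldots''), and over $\Q$ the factor $|H_1(X;\Z)|$ is invertible, so $[\theta^{[s,r]}_{Y}]=0$ without any restriction on $H_1(X;\Z)$. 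Also note that the general ``in particular'' does not follow by simply plugging $Y_1=S^3$ or $Y_2=S^3$ into the main identity (the direction of $IW^1$ makes that vacuous), so the separate one-ended argument you give is indeed the right move; if $b_1(X)>0$ one first surgers circles in the interior to arrange $H^1(X;\R)=0$ without changing $b_2^+$.
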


Let $i_{[s',r']}^{[s,r]}$ be the dual map of $i^{[s',r']}_{[s,r]}$. We use the same notation for the induced maps on (co)homology by abusing notation. One can observe from Lemma \ref{lemma-shift} that:

\begin{lemma}[Lemma 2.15, \cite{NST24}]
\label{lemma-theta-shift}
  Suppose $s\le s' \le 0 \le r \le r'$ where $r, r'\in \R_Y$. Then,
  $$i_{[s',r']}^{[s,r]}[\theta_Y^{[s',r']}] = [\theta_Y^{[s,r]}].$$
\end{lemma}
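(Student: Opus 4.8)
The plan is to prove the identity already at the level of cochains in $CI^1_{[s,r]}(Y)$, from which the asserted equality in $I^1_{[s,r]}(Y)$ is immediate. By definition of the dual map, $i_{[s',r']}^{[s,r]}(\theta_Y^{[s',r']})$ is the cochain $a\mapsto\theta_Y^{[s',r']}\!\left(i_{[s,r]}^{[s',r']}(a)\right)$ on $CI_1^{[s,r]}(Y)$. Since $i_{[s,r]}^{[s',r']}$ sends a generator $a$ to $a$ when $a\in CI_1^{[s',r']}(Y)$ and to $0$ otherwise, and since $\theta_Y^{[s,r]}$ and $\theta_Y^{[s',r']}$ are both restrictions of the single cochain $\theta_Y$ on $CI_1(Y)$, we obtain on each generator $a\in CI_1^{[s,r]}(Y)$
\[
\left(\theta_Y^{[s,r]}-i_{[s',r']}^{[s,r]}(\theta_Y^{[s',r']})\right)(a)=
\begin{cases}
\theta_Y(a) & \text{if } a\in CI_1^{[s,r]}(Y)\smallsetminus CI_1^{[s',r']}(Y),\\[2pt]
0 & \text{otherwise.}
\end{cases}
\]
So it suffices to show that $\theta_Y$ vanishes on every index-$1$ generator lying in the first window but not the second.

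First I would check that such a generator $a$ has $cs_{Y,\pi}(a)<0$. Because $s\le s'\le 0\le r\le r'$, the interval defining $CI_1^{[s',r']}(Y)$ has upper endpoint $r'\ge r$, while any $a\in CI_1^{[s,r]}(Y)$ satisfies $cs_{Y,\pi}(a)<r\le r'$; hence $a$ can fail to lie in $CI_1^{[s',r']}(Y)$ only by $cs_{Y,\pi}(a)$ being at or below the lower endpoint of that interval, which is $s'$ when $s'\in\R_Y$ and $s'-\lambda_Y/2$ when $s'\in\Lambda_Y$. Since $0=cs_Y(\theta)\in\Lambda_Y$, the case $s'\in\R_Y$ forces $s'<0$ and so $cs_{Y,\pi}(a)\le s'<0$, while the case $s'\in\Lambda_Y$ gives $cs_{Y,\pi}(a)\le s'-\lambda_Y/2<s'\le 0$. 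Either way $cs_{Y,\pi}(a)<0$; in particular this covers the boundary case $s'=0$, where $0\in\Lambda_Y$ forces the shift and so $cs_{Y,\pi}(a)\le-\lambda_Y/2<0$.

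Finally I would invoke energy non-negativity. By definition $\theta_Y(a)=\#\!\left(M^Y(a,\theta)/\R\right)$, and any instanton in $M^Y(a,\theta)$ has non-negative Yang--Mills energy, which in the perturbed theory equals $cs_{Y,\pi}(a)-cs_{Y,\pi}(\theta)$; since the trivialization of $P_Y$ is fixed so that $cs_{Y,\pi}(\theta)=0$, the condition $cs_{Y,\pi}(a)<0$ makes $M^Y(a,\theta)$ empty, whence $\theta_Y(a)=0$. Therefore $\theta_Y^{[s,r]}-i_{[s',r']}^{[s,r]}(\theta_Y^{[s',r']})$ vanishes identically as a cochain, and a fortiori $i_{[s',r']}^{[s,r]}[\theta_Y^{[s',r']}]=[\theta_Y^{[s,r]}]$.

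The routine steps are the unwinding of the dual map and the fact that $\theta_Y$ is supported on non-negative Chern--Simons levels. The one place requiring genuine care is the bookkeeping of the endpoint conventions in the definition of $CI_i^{[s,r]}(Y)$: one must keep track of the $\lambda_Y/2$-shift that appears exactly when $s$ (resp.\ $s'$) lies in $\Lambda_Y$, and verify that the truncated generators remain at strictly negative Chern--Simons level in every combination of $s,s'\in\R_Y$ or $\Lambda_Y$ --- this is what makes the cochain-level vanishing hold without the extra hypothesis, present in Lemma~\ref{lemma-shift}, that the relevant intervals avoid $\Lambda_Y^*$. It is also worth recording explicitly (as in the setup of \cite{NST24}) that the perturbation is chosen so that $\theta$ persists as a reducible critical point with $cs_{Y,\pi}(\theta)=0$, since the energy computation relies on this normalization.
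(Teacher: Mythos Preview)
The paper does not give its own proof of this lemma; it is simply quoted from \cite{NST24} as Lemma~2.15 there, so there is nothing in the paper to compare your argument against.

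That said, your cochain-level argument is correct and is essentially the standard proof one finds in \cite{NST24}. Unwinding the dual map, the difference $\theta_Y^{[s,r]}-i_{[s',r']}^{[s,r]}(\theta_Y^{[s',r']})$ is supported on index-$1$ generators in the window $[s,r]$ but not in $[s',r']$; since $r\le r'$ these generators must lie at Chern--Simons level $\le s'$ (or $\le s'-\lambda_Y/2$ when $s'\in\Lambda_Y$), hence strictly below $0=cs_{Y,\pi}(\theta)$, and so $M^Y(a,\theta)=\emptyset$ by monotonicity of the perturbed functional along trajectories. Your observation that $0\in\Lambda_Y$ (because $\theta$ is always a critical point with $cs_Y(\theta)=0$) is exactly what handles the borderline case $s'=0$, and your remark about the normalization $cs_{Y,\pi}(\theta)=0$ is the right thing to flag. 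The one cosmetic point: rather than phrasing the last step via Yang--Mills energy (which is only approximately the CS drop after perturbation), it is cleaner to say directly that gradient trajectories of $cs_{Y,\pi}$ strictly decrease $cs_{Y,\pi}$, which is all you need.
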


Suppose $[\theta_Y^{[s, r]}]\neq 0$. By definition of $i_{[s,r]}^{[s',r']}$ and Lemma \ref{lemma-theta-shift}, $[\theta_Y^{[s, r']}]$ vanishes for some $r'< r$ and then $[\theta_Y^{[s, r'']}]=0$ for any $r''\le r'$. This \say{birth-death} property motivates to define the $r_s$-invariant to measure when the obstruction class $[\theta_Y^{[s,r]}]$ begins to vanish. The coefficient ring is taken as $\Q$, since one may want to divide $IW^1_{[s,r]}([\theta_{Y_2}^{[s,r]}])$ by the non-zero integer $|H_1(W;\Z)|$ to obtain $[\theta_{Y_1}^{[s,r]}]$ as in Theorem \ref{thm-IW-theta}.

\begin{definition}[Definition 3.2, \cite{NST24}]
    Let $s\in [-\infty, 0]$. The $r_s$-invariant of $Y$ is defined as:
    $$r_s (Y) = \sup \{ r \in (0,\infty] \ | \ [\theta_Y^{[s, r]}\otimes id_\Q] = 0 \in I^1_{[s,r]} (Y;\Q) \}.$$ 
  \end{definition}

By definition and the vanishing property of the obstruction class, if $Y$ bounds a negative definite $4$-manifold, then $r_s(Y) = \infty$ for all $s\in[-\infty, 0]$. In particular, for any $s\in[-\infty, 0]$ and any Seifert fibered sphere $\Sigma(a_1,\ldots, a_n)$, we have
$$r_s(\Sigma(a_1,\ldots,a_n))=\infty.$$

Nozaki, Sato, and Taniguchi \cite[Theorem 5.15]{NST24} proved that the value $r_s(Y)$ is invariant under rational homology cobordism. They also obtained the following connected sum formula \cite[Theorem~1.1(4)]{NST24} $$r_{s_1+s_2}(Y_1\#Y_2)\ge \min\{r_{s_1}(Y_1)+s_2, \ r_{s_2}(Y_2)+s_1\},$$ so it is useful to take $s=0$ to obtain linear independence in $\Theta_\Q^3$.

\begin{theorem}[Corollary 5.6, Theorem 5.15, \cite{NST24}]
\label{thm:linear_independence}
Let $\{Y_i\}_{i\geq 1}$ be an infinite family of homology spheres. If all $r_0(Y_i)$ are distinct and finite, and $r_0(-Y_i)=\infty$, then $\{Y_i\}_{i\geq 1}$ is linearly independent in $\Theta_\Q^3$.
\end{theorem}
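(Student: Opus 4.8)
The plan is to first promote the elementary connected-sum bound $r_0(Y_1\#Y_2)\ge\min\{r_0(Y_1),r_0(Y_2)\}$ --- the $s_1=s_2=0$ case of the connected sum formula --- to an \emph{exact} formula for connected sums of members of the family, and then, given any purported $\Z$-linear relation, to isolate the member with the smallest $r_0$-value and contradict the pairwise distinctness of the $r_0(Y_i)$. The heart of the matter is the claim: for any finite list $Y_{i_1},\dots,Y_{i_m}$ of members of the family ($m\ge1$, repetitions allowed), $r_0(Y_{i_1}\#\cdots\#Y_{i_m})=\min_{1\le j\le m}r_0(Y_{i_j})$.

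I would prove this by induction on $m$, the case $m=1$ being immediate. For the step, write $W=W'\#Y_{i_m}$ with $W'=Y_{i_1}\#\cdots\#Y_{i_{m-1}}$, so $r_0(W')=\mu:=\min_{1\le j\le m-1}r_0(Y_{i_j})$ by induction. The connected-sum bound gives $r_0(W)\ge\min\{\mu,r_0(Y_{i_m})\}$ at once. For the reverse inequality, observe $W'=W\#(-Y_{i_m})$ and $Y_{i_m}=W\#(-W')$, where $r_0(-Y_{i_m})=\infty$ by hypothesis and $r_0(-W')=\infty$ too, since $-W'=(-Y_{i_1})\#\cdots\#(-Y_{i_{m-1}})$ and the connected-sum bound gives $r_0(-W')\ge\min_j r_0(-Y_{i_j})=\infty$. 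Applying the connected-sum bound to these two decompositions gives $r_0(W')\ge\min\{r_0(W),\infty\}=r_0(W)$ and $r_0(Y_{i_m})\ge r_0(W)$, so $r_0(W)\le\min\{\mu,r_0(Y_{i_m})\}$; as $\min\{\mu,r_0(Y_{i_m})\}=\min_{1\le j\le m}r_0(Y_{i_j})$, the induction closes.

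Granting this, suppose toward a contradiction that $n_1Y_{i_1}+\cdots+n_kY_{i_k}=0$ in $\Theta_\Q^3$ with the $i_j$ distinct and the $n_j$ not all zero. Let $A$ be the connected sum of $n_j$ copies of $Y_{i_j}$ over the $j$ with $n_j>0$, and $B$ the connected sum of $|n_j|$ copies of $Y_{i_j}$ over the $j$ with $n_j<0$; the relation says exactly that $A$ and $B$ are rationally homology cobordant, hence $r_0(A)=r_0(B)$ by the rational homology cobordism invariance of $r_0$ (Theorem~5.15 of \cite{NST24}). If both $A$ and $B$ are non-empty, the claim gives $\min_{n_j>0}r_0(Y_{i_j})=\min_{n_j<0}r_0(Y_{i_j})$; but these two minima run over disjoint index sets, hence are attained at distinct $Y_i$, contradicting the pairwise distinctness of the $r_0(Y_i)$. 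If only one side, say $A$, is non-empty, then $A$ bounds a rational homology ball, so $r_0(A)=\infty$, contradicting $r_0(A)=\min_{n_j>0}r_0(Y_{i_j})<\infty$ from the claim and finiteness. Therefore all $n_j=0$, and $\{Y_i\}_{i\ge1}$ is linearly independent in $\Theta_\Q^3$.

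The one genuinely non-formal step is the reverse inequality in the claim: the connected-sum bound alone never produces an equality, and it is precisely the one-sided hypothesis $r_0(-Y_i)=\infty$ --- used both for the individual $-Y_i$ and, via the bound, for all connected sums of them that occur --- that forces $r_0$ to register each generator and rules out cancellation. The rest is bookkeeping: keeping track of signs when splitting the relation into $A$ and $B$, and noting that every auxiliary manifold in the induction, notably $-W'$, is built only from orientation-reverses of family members, so the hypothesis genuinely applies.
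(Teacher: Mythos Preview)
The paper does not supply its own proof of this statement: it is quoted as a black box from \cite{NST24} (their Corollary~5.6 and Theorem~5.15), with only the remark that the connected sum formula and the rational homology cobordism invariance of $r_s$ are the relevant inputs. Your argument correctly derives the theorem from precisely those two inputs, and is in fact the standard deduction. One small notational point: when you write $W'=W\#(-Y_{i_m})$ and $Y_{i_m}=W\#(-W')$, these are equalities in $\Theta_\Q^3$ rather than of manifolds, so you are tacitly invoking the rational homology cobordism invariance of $r_0$ already at that step; this is harmless but worth making explicit. Otherwise the proof is complete, including the edge case where one of $A$, $B$ is empty (using $r_0(S^3)=\infty$).
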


Theorem \ref{thm:linear_independence} will play a crucial role in the proof of Theorem \ref{thm-Z}, coupled with the useful formula for $r_0(Y)$ for a Seifert fibered sphere $Y$ such that $R(Y) > 0$, where $R$ is the integer defined for a Seifert fibered sphere by Fintushel and Stern \cite{FS85}. We close this section by presenting such a formula.

\begin{theorem}[Corollary 1.4, \cite{NST24}]
\label{thm:formula}
  Let $Y$ be the Seifert fibered sphere $\Sigma(a_1,\ldots, a_n)$. If $R(Y) >0$, then for any $s\in [-\infty, 0]$,
  $$r_s(-Y) = \frac{1}{4a_1\cdots a_n} \cdot$$
\end{theorem}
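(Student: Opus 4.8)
The plan is to compute $r_s(-Y)$ directly from the filtered instanton Floer chain complex of $-Y$, using that for a Seifert fibered sphere this complex is completely understood through the work of Fintushel and Stern \cite{FS85, FS90}. Two classical inputs are essential: (i) every irreducible flat $SU(2)$-connection on $\Sigma(a_1,\ldots,a_n)$, and hence on $-Y$, is nondegenerate and rigid, these connections are enumerated combinatorially by admissible rotation data, and all of them carry Floer indices of a single parity, so that $\partial\equiv 0$ on $CI_*(-Y)$; and (ii) the Chern--Simons invariant of such a connection is congruent mod $1$ to $-e^2/(4a_1\cdots a_n)$ for a suitable integer $e$, so that after a small admissible perturbation the values $cs_{-Y,\pi}$ of the irreducible critical points all lie near $\frac{1}{4a_1\cdots a_n}\Z$ inside $(0,1)$. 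Since $\partial\equiv 0$, the dual differential on the filtered cochain complex also vanishes, so $I^1_{[s,r]}(-Y;\Q)\cong CI^1_{[s,r]}(-Y;\Q)$ and the class $[\theta_{-Y}^{[s,r]}\otimes id_\Q]$ is nonzero exactly when there is an irreducible flat connection $a$ on $-Y$ with $\mathrm{ind}(a)=1$, with $cs_{-Y}(a)$ in the relevant filtration interval, and with $\theta_{-Y}(a)\neq 0$. As $s\le 0$ and all these Chern--Simons values are positive, the filtration constraint reduces to $cs_{-Y}(a)<r$; this already explains why the answer is independent of $s\in[-\infty,0]$, and it yields
\[
r_s(-Y)=\min\bigl\{\, cs_{-Y}(a)\ :\ a\ \text{ an irreducible flat connection on }-Y,\ \mathrm{ind}(a)=1,\ \theta_{-Y}(a)\neq 0 \,\bigr\}.
\]

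The lower bound $r_s(-Y)\ge\frac{1}{4a_1\cdots a_n}$ is then immediate and needs no hypothesis on $R(Y)$: for $0<r<\frac{1}{4a_1\cdots a_n}$ there are no irreducible flat connections at all with $cs_{-Y}$ in $(s,r)$, so $CI^1_{[s,r]}(-Y)=0$ and the obstruction class vanishes, which is exactly the ``birth-death'' phenomenon recorded around Lemma~\ref{lemma-theta-shift}. For the upper bound $r_s(-Y)\le\frac{1}{4a_1\cdots a_n}$, I would take $r$ slightly larger than $\frac{1}{4a_1\cdots a_n}$ and exhibit the irreducible flat connection $a_0$ on $-Y$ of least Chern--Simons value $cs_{-Y}(a_0)=\frac{1}{4a_1\cdots a_n}$, namely the one with $e=\pm 1$ (which always occurs), and verify that $\mathrm{ind}(a_0)=1$ and $\theta_{-Y}(a_0)\neq 0$. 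Granting this, monotonicity of $[\theta^{[s,r]}_{-Y}]$ in $r$ (again because the complex has zero differential, so once the restricted cocycle is nonzero it remains nonzero) shows the class is nonzero for every $r>\frac{1}{4a_1\cdots a_n}$; together with the lower bound this gives $r_s(-Y)=\frac{1}{4a_1\cdots a_n}$ for all $s\in[-\infty,0]$.

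The main obstacle is precisely the verification in the last step, i.e.\ understanding what the hypothesis $R(Y)>0$ contributes. Here one uses that the Fintushel--Stern integer $R(Y)$ is, up to normalization, a signature-defect / minimal-Floer-grading quantity attached to the flat connections of smallest Chern--Simons value on $-Y$ \cite{FS85, FS90}, and that its positivity is exactly the condition forcing the distinguished connection $a_0$ into Floer index $1$; when $R(Y)\le 0$ --- which by Issa--McCoy \cite{IM18} happens for every Seifert fibered sphere with $d(Y)=0$ --- the connection $a_0$ lands in the wrong degree and $r_s(-Y)$ is instead governed by a strictly larger Chern--Simons value that is hard to pin down, which is the difficulty alluded to at the end of the introduction. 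The remaining nonvanishing $\theta_{-Y}(a_0)\neq 0$ should follow from Donaldson's and Fintushel--Stern's count of instanton trajectories from $a_0$ to the reducible $\theta$ \cite{Don02, FS90}: on the Seifert-fibered orbifold/cobordism bounded by $-Y$ the relevant one-dimensional moduli space is cut out transversely and consists, modulo translation, of a single instanton realizing the Chern--Simons drop $\frac{1}{4a_1\cdots a_n}$, so $\theta_{-Y}(a_0)=\pm 1$. This is the one place where genuine gauge theory, together with careful orientation and reducible-locus bookkeeping for the reversed-orientation manifold, is needed rather than combinatorics; a more self-contained alternative would be to first prove a general formula for $r_s$ of an arbitrary Seifert fibered sphere in terms of its Seifert invariants (equivalently, its graded root and the Fintushel--Stern data) and then specialize to the regime $R(Y)>0$, where the formula collapses to $\frac{1}{4a_1\cdots a_n}$.
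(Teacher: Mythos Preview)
The paper does not prove this statement at all: it is quoted verbatim as Corollary~1.4 of \cite{NST24} and used as a black box in Lemmas~\ref{lemma:key_lemma1} and~\ref{lemma:key_lemma1.5}. So there is no ``paper's own proof'' to compare your proposal against.

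That said, your sketch is broadly in the spirit of the argument in \cite{NST24}: the vanishing of the differential on $CI_*(-\Sigma(a_1,\ldots,a_n))$ (all irreducible flat connections have the same index parity), the identification of Chern--Simons values as lying in $\tfrac{1}{4a_1\cdots a_n}\Z$, and the reduction of $r_s$ to the minimal Chern--Simons value of an index-$1$ generator on which $\theta_{-Y}$ is nonzero are all correct ingredients. You are also right that the lower bound requires nothing and that the content is entirely in the upper bound.

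Where your write-up is not yet a proof is exactly where you flag it. Your description of the role of $R(Y)>0$ is slightly off: $R(Y)$ is not primarily a grading constraint on $a_0$ but rather (up to normalization) the virtual dimension of the relevant moduli space of anti-self-dual connections on the orbifold/divisor attached to $\Sigma(a_1,\ldots,a_n)$ in \cite{FS85,FS90}; its positivity is what guarantees that the moduli space $M^{-Y}(a_0,\theta)$ is nonempty and one-dimensional, hence that $\theta_{-Y}(a_0)\neq 0$. The Floer index computation $\mathrm{ind}(a_0)=1$ is a separate (and nontrivial) spectral-flow calculation, and the assertion that the count is $\pm 1$ rather than merely nonzero needs the transversality and compactness arguments carried out in \cite{FS90} and reworked in \cite{NST24}. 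So your proposal is a faithful outline of the strategy, but the two steps you label ``should follow'' are precisely the substance of the result in \cite{NST24}, not routine verifications.
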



\section{Proof of Theorem \ref{thm-Z} and Examples}

In this section, we prove Theorem~\ref{thm-Z} which establishes conditions on a Seifert fibered homology sphere $Y$ to ensure that $Y$ and the associated homology sphere $Z_Y$ are different in $\Theta^3_{\mathbb{Q}}$, in terms of the product $\mathcal{E} (Y)$ of its exponents and the Fintushel--Stern $R$-invariant~\cite{FS85}. It is evident that the homology sphere $Y \# -Z_Y$ lies in $\operatorname{ker}(h)$, and hence provides a potential counterexample for Question~\ref{question-1}. We restate Theorem \ref{thm-Z} in a slightly stronger way which clearly implies the version in Section \ref{sec:intro}.

\begin{reptheorem}{thm-Z}
    Let $\{Y(n)\}$ be a family of Seifert fibered sphere and let $Z(n)$ be the associated homology spheres to $Y(n)$ of the same image in $\mathfrak{I}$. If~$R(Y(n)) > 0$, $\mathcal{E} (Y(n)) > 30$, and the integers $\mathcal{E}(Y(n))$ and $(2m+1)(4m+1)(4m+3)$ are all distinct, then $\{Y(n)\} \cup \{Z(n)\}$ are linearly independent in $\Theta_\Q^3$.
\end{reptheorem}

\begin{proof}
Since $Z(n)$ is a linear combination of $\{B(m)\}$, it is enough to prove that $\{Y(n)\}$ and $\{B(m)\}$ are linearly independent in $\Theta_\Q^3$. Because every Seifert fibered sphere bounds a negative definite $4$-manifold, by Theorem \ref{lemma-theta-cobordism} and the definition of the $r_s$-invariants, $$r_0(Y(n))=r_0(B(m))=\infty.$$

By the assumption $R(Y(n)) > 0$, we can apply Theorem \ref{thm:formula} to obtain $$r_0(-Y(n)) = \frac{1}{4 \mathcal{E}(Y(n))} \cdot$$ Thus, under the assumptions that $\mathcal{E}(Y(n))$ and $(2m+1)(4m+1)(4m+3)=\mathcal{E}(B(m))$ are finite and distinct, by Theorem \ref{thm:linear_independence}, the proof is complete.
\end{proof}

Now we present explicit examples of the associated homology spheres $Z_1 (n)$, $Z_2 (n)$, and $Z_3 (n)$ to the following families $Y_1 (n)$, $Y_2 (n)$, and $Y_3 (n)$ of certain Seifert fibered spheres: 

\begin{itemize}
    \item $Y_1(n) = \Sigma(4n+1, 6n+2, 12n+1)$,
    \item $Y_2(n) = \Sigma(4n-1, 6n-2, 12n-1)$,
    \item $Y_3 (n) = \Sigma(8n+1, 12n+1, 24n+5)$,
\end{itemize}

\noindent respectively, in terms of $B (n)$ using Theorem~\ref{thm:decomposition}. Then we prove linear independence of them in $\Theta_\Q^3$ by using Theorem \ref{thm-Z}. 

It is well-known that the graded root of the Poincar\'e homology sphere has the monotone graded subroot $M(2,2)$ since $\overline{d}(\Sigma(2,3,5)) = 2 = \underline{d}(\Sigma(2,3,5))$. By Theorem~\ref{thm:parametrization}, $M(2,2)$ is a single tower with the uppermost vertex in grading $2$.

We next describe the monotone graded subroots of $B(n)$ for $n \geq 1$ \cite{DS19} (see also \cite{Sto17}) and the ones for the families $Y_1 (n)$, $Y_2 (n)$, and $Y_3 (n)$ \cite{KS22} (see also \cite{KS20}):

\begin{theorem}[Theorem~1.1, \cite{DS19}; Theorem~1.2, \cite{KS22}]
\label{thm:monotone_roots}
For $n\geq 1$, the Brieskorn spheres $B(n)$, $Y_1(n)$ and $Y_2(n)$ have the monotone graded subroots $$M(2n,0), \quad M(4n,0;2n,2n), \quad \text{and} \quad M(4n-2,0;2n,2n),$$ respectively, which are depicted in Figure~\ref{fig:roots}. Moreover, the Brieskorn spheres $Y_3 (n)$ has the same monotone graded subroot as $B(4n)$.
\end{theorem}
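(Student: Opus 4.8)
The plan is to prove Theorem~\ref{thm:monotone_roots} by direct computation of the relevant graded roots from the negative definite plumbing graphs, followed by extraction of the monotone graded subroots via Theorem~\ref{thm:parametrization}. For the families $B(n)$, $Y_1(n)$, and $Y_2(n)$, the references \cite{DS19, KS22} already carry out the hard combinatorial analysis, so the bulk of my task is to assemble their statements into the uniform form stated here and to verify the claimed parameters. Concretely, I would first recall that for a Seifert fibered sphere $Y$ the monotone graded subroot is determined by $\overline{d}(Y)$ (which gives $h_1$), $\underline{d}(Y)$ (which gives $r_n$), and the intermediate peak/valley data of the graded root $R$ of $Y$; then I would plug in the explicit values computed in \cite{DS19} for $B(n)$ and in \cite{KS22} for $Y_1(n)$ and $Y_2(n)$.

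First I would treat $B(n) = \Sigma(2n+1, 4n+1, 4n+3)$: by \cite[Theorem~1.1]{DS19} the graded root of $B(n)$ has a single valley, so its monotone graded subroot has the form $M(h_1, r_1)$ with a single pair. Theorem~\ref{thm:parametrization} then forces $h_1 = \overline{d}(B(n))$ and $r_1 = \underline{d}(B(n))$, and the computation in \cite{DS19} gives $\overline{d}(B(n)) = 2n$ and $\underline{d}(B(n)) = 0$, yielding $M(2n, 0)$. Next, for $Y_1(n) = \Sigma(4n+1, 6n+2, 12n+1)$ and $Y_2(n) = \Sigma(4n-1, 6n-2, 12n-1)$, I would invoke \cite[Theorem~1.2]{KS22}, where the graded roots and monotone graded subroots of these families are computed; the shape there has two pairs, $M(h_1, r_1; h_2, r_2)$ with $h_2 = r_2$, and the values work out to $M(4n, 0; 2n, 2n)$ and $M(4n-2, 0; 2n, 2n)$ respectively, with $\overline{d}(Y_1(n)) = 4n$, $\overline{d}(Y_2(n)) = 4n-2$, and $\underline{d} = 2n$ in both cases. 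The figure referenced in the statement (Figure~\ref{fig:roots}) would display these three subroots side by side for clarity.

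Finally, for the claim that $Y_3(n) = \Sigma(8n+1, 12n+1, 24n+5)$ has the same monotone graded subroot as $B(4n)$, I would again appeal to the computation in \cite{KS22}: one computes the graded root of $Y_3(n)$ from its minimal negative definite plumbing graph and observes that, after the reduction to the monotone graded subroot, it has a single valley with $\overline{d}(Y_3(n)) = \overline{d}(B(4n)) = 8n$ and $\underline{d}(Y_3(n)) = \underline{d}(B(4n)) = 0$. Since a monotone graded subroot with a single pair is determined by these two correction terms via Theorem~\ref{thm:parametrization}, the two subroots coincide, namely both equal $M(8n, 0)$.

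The main obstacle here is not conceptual but bookkeeping: one must correctly track the grading shift $\sigma$ (determined by the algebraic topology of the plumbing, as in the footnote to the graded root isomorphism) so that the values of $\overline{d}$ and $\underline{d}$ come out with the normalization used in \cite{DS19}, and one must be careful that the enumeration of valleys of the graded root for $Y_1(n)$ and $Y_2(n)$ produces exactly the two-pair form $M(h_1, 0; 2n, 2n)$ rather than something that only collapses to it after applying Theorem~\ref{thm:decomposition}. Since all of these computations are already carried out in the cited references, the proof reduces to quoting them with the correct identifications; I would state this explicitly and refer the reader to \cite{DS19, KS22} for the underlying lattice-homology computations rather than reproducing them.
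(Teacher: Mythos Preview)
Your proposal is correct and matches the paper's approach: the paper does not give its own proof of this theorem but simply states it as a result quoted from \cite[Theorem~1.1]{DS19} and \cite[Theorem~1.2]{KS22}, exactly as you suggest. Your outline of how those references obtain the stated monotone graded subroots is accurate, and your final remark that the proof reduces to citing them is precisely what the paper does.
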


\begin{figure}[htbp]
\centering
\includegraphics[width=0.7\columnwidth]{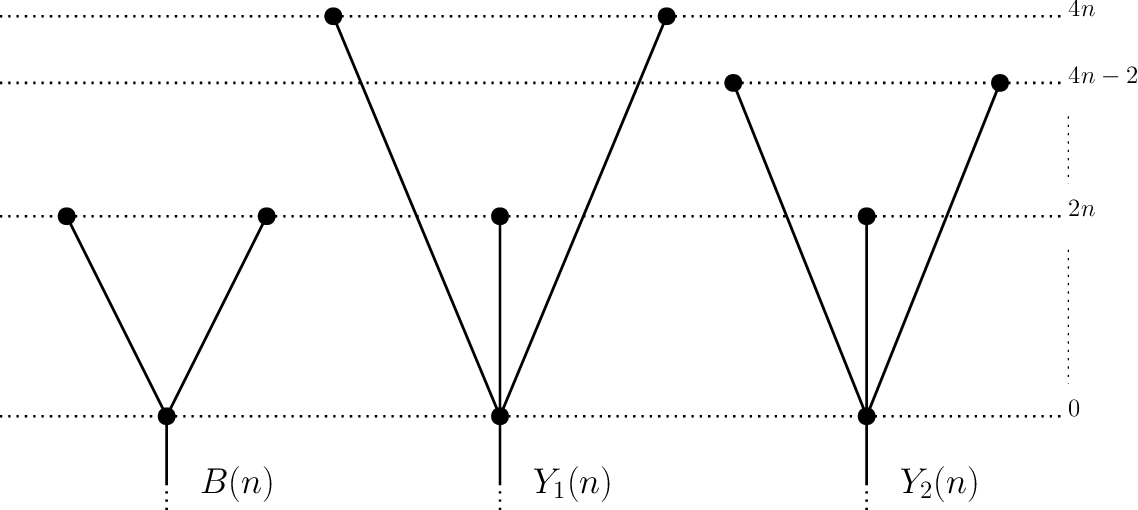}
\caption{The monotone graded subroots of $B(n)$, $Y_1(n)$ and $Y_2(n)$ for $n \geq 1$.}
\label{fig:roots}
\end{figure}

\begin{lemma}
\label{lemma:key_lemma2}
In the local equivalence group $\mathfrak{I}$, we have the following for $n \ge 1:$
\begin{enumerate}
    \item \label{case2} $h(Y_1(n)) = h(B(2n) \ \# \ - B(n) \ \# \ nB(0))$,
    \item \label{case3} $h(Y_2(n)) = h(B(2n-1) \ \# \ - B(n) \ \# \ nB(0))$,
    \item \label{case4} $h(Y_3(n)) = h(B(4n))$.
\end{enumerate}
\end{lemma}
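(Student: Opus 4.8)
The plan is to reduce each identity to a purely combinatorial statement about monotone graded subroots and then apply the decomposition formula of Theorem~\ref{thm:decomposition} together with the connected sum formula $h(Y_1 \# Y_2) = h(Y_1) + h(Y_2)$ in $\mathfrak{I}$. By Theorem~\ref{thm:monotone_roots}, the Seifert fibered spheres in play have explicit monotone graded subroots: $B(n) \mapsto M(2n,0)$, $B(2n) \mapsto M(4n,0)$, $Y_1(n) \mapsto M(4n,0;2n,2n)$, $Y_2(n) \mapsto M(4n-2,0;2n,2n)$, and $B(0) = \Sigma(2,3,5) \mapsto M(2,2)$. Since $h$ of a Seifert fibered sphere equals the local equivalence class of its monotone graded subroot (Theorem~\ref{thm-local_eq_combined}), it suffices to verify the corresponding additive relations among these subroots in $\mathfrak{I}$.

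For part \ref{case2}, I would apply Theorem~\ref{thm:decomposition} to $M(4n,0;2n,2n)$, which has $n=2$ in the notation of that theorem with $(h_1,r_1) = (4n,0)$ and $(h_2,r_2) = (2n,2n)$; it yields
\[
M(4n,0;2n,2n) = M(4n,0) + M(2n,2n) - M(2n,0).
\]
Now $M(4n,0) = h(B(2n))$ and $M(2n,0) = h(B(n))$ by Theorem~\ref{thm:monotone_roots}, while $M(2n,2n)$ is a single tower with uppermost vertex in grading $2n$; by Theorem~\ref{thm:parametrization} this is the monotone graded subroot of a homology sphere with $\underline{d} = \overline{d} = 2n$, and one checks (or cites from \cite{DS19}) that $n \cdot M(2,2) = M(2n,2n)$ in $\mathfrak{I}$, i.e.\ $M(2n,2n) = h(nB(0))$, since tensoring $n$ copies of the $\iota$-complex $M(2,2)$ shifts the bottom of the tower up by $2n$ and preserves triviality of the involution. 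Combining gives $h(Y_1(n)) = h(B(2n)) - h(B(n)) + h(nB(0))$, which is exactly \ref{case2}. Part \ref{case3} is identical after replacing $M(4n,0) = h(B(2n))$ by $M(4n-2,0) = h(B(2n-1))$, using that $B(n)$ has subroot $M(2n,0)$ so $B(2n-1)$ has subroot $M(4n-2,0)$. Part \ref{case4} is immediate: Theorem~\ref{thm:monotone_roots} states that $Y_3(n)$ and $B(4n)$ have the same monotone graded subroot, so $h(Y_3(n)) = h(B(4n))$ in $\mathfrak{I}$ by Theorem~\ref{thm-local_eq_combined}.

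The only genuinely non-routine point is the identification $M(2n,2n) = n \cdot M(2,2) = h(nB(0))$ in $\mathfrak{I}$, i.e.\ that the local equivalence class of the Poincaré sphere has order behaving additively in this way and that its $n$-fold sum is the single tower starting in grading $2n$ with trivial involution. This is a small tensor-product computation with the standard complex $C_*(M(2,2))$: its homology after localization is $\mathbb{F}[U,U^{-1}]$ with bottom grading $2$, the involution is the identity, and the tensor product of $n$ such complexes is again a single tower with trivial involution, now with bottom grading $2n$, which is the standard complex of $M(2n,2n)$. I would either spell this out in one or two lines or simply cite the analogous computation in \cite{DS19}, where $h(B(0))$ is shown to generate a $\Z$-summand of $\mathfrak{I}_{SF}$ with $M(2,2)$ as its standard model. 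Everything else is bookkeeping with Theorem~\ref{thm:decomposition} and the connected sum formula.
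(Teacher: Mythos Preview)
Your proposal is correct and follows essentially the same route as the paper: identify $h(Y_i(n))$ with the corresponding monotone graded subroot via Theorems~\ref{thm-local_eq_combined} and~\ref{thm:monotone_roots}, apply the decomposition of Theorem~\ref{thm:decomposition} to split $M(4n,0;2n,2n)$ (resp.\ $M(4n-2,0;2n,2n)$) as $M(4n,0)+M(2n,2n)-M(2n,0)$ (resp.\ with $M(4n-2,0)$), and then use $M(2n,2n)=n\cdot M(2,2)=h(nB(0))$ together with the connected sum formula. The paper handles the tensor-product identification $M(2n,2n)=n\cdot M(2,2)$ in a single sentence just as you suggest, and treats case~(\ref{case4}) as an immediate consequence of Theorem~\ref{thm:monotone_roots}.
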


\begin{proof}
By applying Theorems~\ref{thm-local_eq_combined} and \ref{thm:monotone_roots} together, we first see that $$h(Y_1 (n)) = M(4n,0; 2n, 2n) \quad \text{and} \quad h(Y_2 (n)) = M(4n-2,0; 2n, 2n).$$

Note that $M(2n, 2n)$ is the single tower with the uppermost vertex in grading $2n$, and it is obtained by tensoring $n$ copies of $M(2,2)$, where $M(2,2)=h(B(0))$. So we have: $$M(2n,2n) = \underbrace{M(2,2) + \cdots + M(2,2)}_{n\text{ copies}}.$$

We next use Theorem~\ref{thm:decomposition} to decompose the monotone graded subroots above. Since $h: \Theta^3_\Z \to \mathfrak{I}$ is a homomorphism, for the case (\ref{case2}), we have:
\begin{align*}
    h(Y_1 (n)) & = M(4n,0;2n,2n), \\
    &= M(4n,0) + M(2n,2n) - M(2n,0), \\
    &= M(4n,0) +  \underbrace{M(2,2) + \cdots + M(2,2)}_{n\text{ copies}} - M(2n,0), \\
    &= h(B(2n) \ \# \ - B(n) \ \# \ nB(0)).
\end{align*} One can similarly prove the case (\ref{case3}). Finally, the case (\ref{case4}) is a direct consequence of Theorem~\ref{thm:monotone_roots}.
\end{proof}

Therefore, the associated homology spheres $Z_1(n)$, $Z_2(n)$, and $Z_3(n)$ respectively to $Y_1(n)$, $Y_2(n)$, and $Y_3(n)$ are as follows:

\begin{itemize}
    \item $Z_1 (n) = B(2n) \ \# \ - B(n) \ \# \ nB(0)$,
    \item $Z_2 (n) = B(2n-1) \ \# \ - B(n) \ \# \ nB(0)$,
    \item $Z_3 (n) = B(4n)$.
\end{itemize}

As an application of Theorem~\ref{thm-Z}, we will show that $\{ Y_i(n)\#-Z_i(n) \}_{n \geq 1}$ for $i=1,2,3$, which lie in the kernel of $h$, are linearly independent in $\Theta_\Q^3$.

\begin{theorem}
\label{theorem:key_lemma1}
The homology spheres
$$\big \{ Y_1 (n) \# - Z_1 (n) \big \}_{n \geq 1} \ \cup \ \big \{ Y_2 (n) \# - Z_2 (n) \big \}_{n \geq 1}$$ are linearly independent in $\operatorname{ker}(h)$ and $\Theta_\Q^3$.
\end{theorem}

\begin{proof}

By \cite[Proposition~4.3]{KS20}, all of these homology spheres have negative definite plumbing graphs with central weights $e = -2$. Thus, since $R(Y)=-2e-3$ for any Seifert fibered sphere $Y$ \cite{NZ85}, we have: $$R(B(n)) = R(Y_1(n)) = R(Y_2(n)) = 1.$$

To apply Theorem \ref{thm-Z}, it is enough to show that all $\mathcal{E} (B(n))$, $\mathcal{E}(Y_1(m))$ and $\mathcal{E}(Y_2(k))$ for any positive integers $n, m$ and $k$ are distinct. It is clear that $(2n+1)(4n+1)(4n+3)$, $(4m+1)(6m+2)(12m+1)$ and $(4k-1)(6k-2)(12k-1)$ are never the same as $2\cdot 3 \cdot 5 = 30$.

Note that $(2n+1)(4n+1)(4n+3)$ is always odd. In contrast, we see that $(4m\pm 1)(6m\pm 2)(12m\pm 1)$ are always even. Thus, there are no pair of positive integers $(n, m)$ such that $(2n+1)(4n+1)(4n+3) = (4m\pm 1)(6m\pm 2)(12m\pm 1)$.

Finally, by dividing both of $(4m\pm 1)(6m\pm 2)(12m\pm 1)$ by $2$, define $f(a) = (4a + 1)(3a+1)(12a+1)$ and $g(b) = (4b-1)(3b-1)(12b-1)$. Then for any $a \geq 1$, we can see that $g(a) < f(a) < g(a+1)$. Since $g(b)$ is strictly increasing when $b \geq 1$, there cannot exist a pair of positive integers $(a,b)$ such that $f(a) = g(b)$, which implies that $\mathcal{E}(Y_1(m))\neq \mathcal{E}(Y_2(k))$, which completes the proof.
\end{proof}

On the other hand, consider $Y_3(n)$ and the associated one $Z_3(n)$. In this case, $Z_3(n)$ is also Seifert fibered, so by Theorem~\ref{thm:DSS}, we see that $\{ Y_3 (n) \# - Z_3 (n) \}_{n \geq 1}$ have the trivial local equivalence class of $\operatorname{Pin(2)}$-equivariant Seiberg--Witten Floer stable homotopy type in $\mathfrak{LE}$. However, we also prove their linear independence in $\Theta_\Q^3$.

\begin{theorem}
\label{theorem:key_lemma1.5}
The homology spheres
$$\big \{Y_3(n) \# - Z_3 (n) \big \}_{n \geq 1} $$ are linearly independent in $\operatorname{ker}(\Theta^3_\Z \to \mathfrak{LE})$ and $\Theta_\Q^3$.
\end{theorem}
\begin{proof}
    For the same reason as in the proof of Theorem ~\ref{theorem:key_lemma1}, we find that $R(Y_3(n)) = 1$. Since $$(8n+1)(16n+1)(16n+3) \not \equiv (8m+1) (12m+1)(24m+5) \mod 4,$$ we conclude that $\mathcal{E} (Y_3 (n))$ and $\mathcal{E} (Z_3 (m))$ are distinct and we can apply Theorem~\ref{thm-Z}.
\end{proof}

Note that our examples in Theorem~\ref{theorem:key_lemma1} give other families of homology spheres whose local equivalence classes (also of Seiberg--Witten Floer types for Theorem~\ref{theorem:key_lemma1.5}) are trivial, yet which are linearly independent in $\Theta^3_\Q$, as stated in Corollary~\ref{cor-A}.

\bibliography{main.0.0}
\bibliographystyle{amsalpha}
\end{document}